\documentclass[reqno,11pt]{amsart}
\usepackage{amsmath, amssymb, amsthm}
\usepackage{graphicx}
\usepackage{xtab}
\usepackage{color}
\title{Level 17 Ramanujan-Sato Series}
\author{Tim Huber, Daniel Schultz, and DongXi Ye}
\address{
School of Mathematical and Statistical Sciences, University of Texas Rio Grande
Valley, Edinburg, Texas 78539, USA.}
\email{timothy.huber@utrgv.edu}
\address{
Department of Mathematics, Pennsylvania State University, University Park, State
College, Pennsylvania 16802, USA.}
\email{dps23@psu.edu}
\address{
Department of Mathematics, University of Wisconsin\\
480 Lincoln Drive, Madison, Wisconsin 53706, USA.}
\email{lawrencefrommath@gmail.com}
\subjclass[2010]{11F03; 11F11}
\keywords{Dedekind eta function; Eisenstein series; 
Modular form; Pi.}
\addtolength{\textwidth}{1.00in}
\addtolength{\hoffset}{-0.50in}
\addtolength{\textheight}{1.0in}
\addtolength{\voffset}{-0.5in}
\newcommand{\mathsym}[1]{{}}

\newcommand{\smat}[4] {(\begin{smallmatrix} #1 & #2 \\ #3 &
    #4 \end{smallmatrix})}
\newcommand{\mat}[4] {\left(\begin{array}{cc} #1 & #2 \\ #3 &
    #4 \end{array}\right)}
\newcommand{\op}[1]  { \operatorname{ #1 }}

\newcommand{\bbZ}[0]  { \mathbb{Z}}
\newcommand{\bbQ}[0]  { \mathbb{Q}}


\numberwithin{equation}{section}
\newtheorem{theorem}{Theorem}[section]

\newtheorem{lemma}[theorem]{Lemma}

\newtheorem{corollary}[theorem]{Corollary}
\newtheorem{proposition}[theorem]{Proposition}

\begin{document}
\begin{abstract}
Two level $17$ modular functions
 $$r = q^2 \prod_{n=1}^{\infty}
 (1-q^{n})^{\left( \frac{n}{17} \right)},\qquad s = q^{2} \prod_{n=1}^{\infty} \frac{(1 -
     q^{17n})^{3}}{(1-q^{n})^{3}},
 $$
are used to construct 
a new class of
 Ramanujan-Sato series for $1/\pi$. The expansions are induced by modular identities similar to those level of $5$ and $13$ appearing in Ramanujan's Notebooks. A complete list of rational and quadratic series corresponding to singular values of the parameters is derived.
\end{abstract}

\maketitle
\allowdisplaybreaks

\section{Introduction}
Let $\tau$ be a complex number with positive imaginary part and set $q=e^{2\pi i \tau}$. Define
$$r=r(\tau) = q^2 \prod_{n=1}^{\infty} (1 - q^{n})^{\left( \frac{n}{17} \right)}, \quad s=s(\tau) = q^{2} \prod_{n=1}^{\infty} \frac{(1 -
     q^{17n})^{3}}{(1-q^{n})^{3}}.$$
In this paper, we derive level $17$ Ramanujan-Sato expansions for
$1/\pi$ of the form
\begin{align} \label{seventeen}
  q\frac{d}{dq} \log  s= \sum_{n=0}^{\infty} A_{n} \left (
  \frac{r(r^{2}s+8rs-r-s)}{8r^{3}s-3r^{2}s+r-s} \right )^{n}, \qquad
  \frac{1}{s} = r+\frac{1}{r}-2\sqrt{\frac{4}{r} - 4r - 15},
\end{align}
where $A_{n}$ is defined recursively. These relations are analogous to those at level $13$ and $5$ \cite{MR2981931,MR3324566},
\begin{align} \label{ye}
 q\frac{d}{dq} \log \mathcal{S}&= \sum_{n=0}^{\infty} \mathcal{A}(n)
                                 \left ( \frac{\mathcal{R}(1 -
                                 3\mathcal{R}-\mathcal{R}^{2})}{(1+\mathcal{R}^{2})^{2}}
                                 \right )^{n}, \qquad
                                 \ \ \ \ \frac{1}{\mathcal{S}} =
                                 \frac{1}{\mathcal{R}} - 3 -
                                 \mathcal{R} , \\   q\frac{d}{dq} \log
  S &= \sum_{n=0}^{\infty} a(n) \left ( \frac{R^{5}(1 -
      11R^{5}-R^{10})}{(1+R^{10})^{2}} \right )^{n}, \qquad
      \frac{1}{S} = \frac{1}{R^{5}} - 11 - R^{5}. \label{chan}
\end{align}
Here $a(n) $,  $\mathcal{A}(n)$
 are recursively defined sequences induced from differential
equations and
\begin{align} \label{rr_prod}
  \mathcal{R}(\tau) = q \prod_{n=1}^{\infty}
 (1-q^{n})^{\left( \frac{n}{13} \right)},  \qquad R(\tau) = q^{1/5} \prod_{n=1}^{\infty}
 (1-q^{n})^{\left( \frac{n}{5} \right)}.
\end{align}
Identity \eqref{chan} and explicit evaluations for $R(\tau)$ were used to formulate expansions for $1/\pi$ including
\begin{align} \label{five}
  \frac{1}{\pi} = \frac{1705}{81\sqrt{47}} \sum_{n=0}^{\infty} a(n)
  \left ( n+\frac{71}{682} \right ) \left ( \frac{-1}{15228} \right
  )^{n}, \qquad  a(n) = {2n \choose n} \sum_{j=0}^{n} {n \choose j}^{2} {n+j \choose j} .
\end{align} 
This expression is 
a generalization of $17$ such formulas stated by Ramanujan
\cite{MR988313,approx}. In each formula, the algebraic constants come from 
explicit evaluations for a modular function. 
The sequence $A_{k}$ are coefficients in a series solution to a
differential equation satisfied by a relevant modular
form. Generalizing such relations to higher levels requires finding
differential equations for modular
parameters and relevant identities. The series \cite{MR2981931,MR2994096,MR3258323,MR3324566}
have common construction for
primes 
$p-1 \mid 24$, where
$X_{0}(p)$ has genus zero.  More work remains to unify constructions for other levels.

The purpose of this paper is to construct level $17$ Ramanujan-Sato
series as a prototype for levels such that $X_{0}(N)$ has positive
genus. Central to the construction is the fact that $r$ and $s$ generate
the field of functions invariant under action by  an index two
subgroup of
$\Gamma_{0}(17)$. These constructions and singular value evaluations yield new Ramanujan-Sato expansions, including the rational series 
\begin{align}
  \label{eq:1}
  \frac{1}{\pi}                         &= \frac{1}{\sqrt{11}}\sum_{k=0}^{\infty}
                                                  A_k \frac{307+748k}{
                                                  (-21)^{k+2}}.
\end{align}

Following
\cite{MR2981931,MR2822224}, an expansion for $1/\pi$ is said to be
rational or quadratic if $C/\pi$ can be expressed as a series of algebraic
numbers of degree $1$ or $2$, respectively, for some algebraic number $C$. We derive a
complete list of series of rational and quadratic series from singular values of parameters in \eqref{seventeen}.

In the next section, we give an overview of results at levels $5$ and
$13$ that motivate the approach of the paper. Section \ref{i_d} includes an analogous
construction of level $17$ modular functions. This construction is used to motivate the differential
equation satisfied by 
\begin{align}
  \label{eq:2}
  z(\tau) &= \theta_q \log s, \qquad \theta_q:=q \frac{d}{d q},
\end{align}
with coefficients in the field $\Bbb C(x)$, where 
\begin{align} \label{eq:x}
x(\tau)&=\frac{r(r^{2}s + 8rs- r - s)}{8r^{3}s-3r^{2}s +r-s}.
\end{align} 
We conclude with Section \ref{singular} in which singular values are derived for
$x$ and used to construct a new class of series approximations for
$1/\pi$ of level $17$. We derive a complete list of values of
$x(\tau)$ with $[\Bbb Q(x(\tau)): \Bbb Q] \le 2$ within the radius of
convergence for $z$ as a powers series in $x$ and therefore provide
a complete list of linear and quadratic Ramanujan-Sato series
corresponding to $x(\tau)$.


\section{Level 5 and 13 Series}


The product $R(\tau)$, defined by \eqref{rr_prod}, is the
Rogers-Ramanujan continued fraction \cite{rogers}. Together, $R(\tau)$
and $S(\tau)$, defined by 
\begin{align}
  R(\tau) = \frac{q^{1/5}}{1 +
  \frac{q}{q+\frac{q^{2}}{1+\frac{q^{3}}{1+\cdots}}}}, \qquad S(\tau) = q \prod_{n=1}^{\infty} \frac{(1 -
     q^{5n})^{6}}{(1-q^{n})^{6}},
\end{align}
generate the field of functions invariant under the Hecke subgroup $\Gamma_{0}(5)$, also its index two subgroup $$\Gamma_0^2(5) = \{ \smat abcd \in \op{SL}_2(\bbZ) \mid c \equiv 0 \bmod 5 \mbox{ and } \chi(d)=1 \}\text{.}$$
This motivates 
Ramanujan's reciprocal identity \cite{ramnote}, \cite[p. 267]{berndt91}
\begin{align} \label{s}
  \frac{1}{R^{5}}-11-R^{5} = \frac{1}{S}. 
\end{align}
Equation \eqref{chan} expresses the
logarithmic derivative $Z = \theta_{q} \log S(\tau)$ in terms of a series solution to a third order linear differential equation
\cite{MR2981931}  satisfied by $S$ and the $\Gamma_{0}(5)$ invariant function $T = T(\tau)$
\begin{align} \label{deqS}
(16 T^2+44 T-1)Z_{TTT} &+ (48 T^2+66 T)Z_{TT}  + (44 T^2+34 T) Z_{T} +
  (12 T^2+6 T)Z = 0,
\\ & Z_{T} = T \frac{d}{dT} Z, \qquad T = \frac{R^{5}(1 - 11R^{5} - R^{10})}{(1 + R^{10})^{2}}.
\end{align}
The form of the equation may be anticipated from a general theorem
\cite{MR743544} (c.f. \cite{MR2031441}).
\begin{theorem} \label{ex_deq}
  Let $\Gamma$ be subgroup of $SL_{2}(\Bbb R)$ commensurable with $SL_{2}(\Bbb Z)$. If $t(\tau)$ is a nonconstant meromorphic modular function and $F(t(\tau))$ is a meromorphic modular form of weight $k$ with respect to $\Gamma$, then $F, \tau F, \ldots, \tau^{k} F$ are linearly independent solutions to a $(k+1)$st order differential linear equation with coefficients that are algebraic functions of $t$. The coefficients are polynomials when $\Gamma \setminus \mathfrak{H}$ has genus zero and $t$ generates the field of modular functions on $\Gamma$.  
\end{theorem}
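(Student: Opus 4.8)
The plan is to exhibit $F,\tau F,\ldots ,\tau^{k}F$ as a basis of the solution space of a monic linear differential operator in $d/dt$ whose monodromy is finite--dimensional by construction; its coefficients are then single--valued and meromorphic on the modular curve, hence algebraic over $\mathbb{C}(t)$, and rational when the curve has genus $0$ and $t$ generates its function field.

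\textbf{Linear independence and the operator.} If $\sum_{j=0}^{k}c_{j}\tau^{j}F\equiv 0$ with $c_{j}\in\mathbb{C}$, then since $F\not\equiv 0$ the polynomial $\sum_{j}c_{j}\tau^{j}$ vanishes on an open subset of $\mathfrak{H}$, so every $c_{j}=0$; thus $V:=\operatorname{span}_{\mathbb{C}}\{\tau^{j}F:0\le j\le k\}$ has dimension $k+1$. Delete from $\Gamma\backslash\mathfrak{H}$ the cusps, the elliptic points, the images of the poles of $F$, the points where $t$ is not a local coordinate (its poles and ramification points), and the zeros of the Wronskian $W:=W(F,\tau F,\ldots,\tau^{k}F)$, all derivatives being taken in $t$; call the result $U$. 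On $U$ every element of $V$ is a holomorphic function of $t$ and $W$ is nonvanishing, so there is a unique monic operator of order $k+1$,
\[
L=(d/dt)^{k+1}+a_{k}(t)(d/dt)^{k}+\cdots+a_{0}(t),\qquad Ly=\frac{W(y,F,\tau F,\ldots,\tau^{k}F)}{W},
\]
with solution space exactly $V$, and its coefficients $a_{j}$ are holomorphic on $U$ as ratios of Wronskians.

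\textbf{Monodromy.} The functions $\tau^{j}F$ are single--valued on $\mathfrak{H}$ but multivalued in $t$; continuing $\tau^{j}F$ around a loop in $U$ replaces $\tau$ by $\gamma\tau$ for the corresponding element $\gamma=\smat{a}{b}{c}{d}\in\Gamma$ of the deck group of $\mathfrak{H}\to\Gamma\backslash\mathfrak{H}$ and replaces $F$ by $F(\gamma\tau)=(c\tau+d)^{k}F(\tau)$, so
\[
\tau^{j}F\;\longmapsto\;(\gamma\tau)^{j}(c\tau+d)^{k}F=(a\tau+b)^{j}(c\tau+d)^{k-j}F=\sum_{i=0}^{k}M_{ij}(\gamma)\,\tau^{i}F,
\]
where the $M_{ij}(\gamma)$ are the coefficients of the degree $\le k$ polynomial $(a\tau+b)^{j}(c\tau+d)^{k-j}$ in $\tau$ and depend only on $\gamma$, not on $\tau$ (the matrix $M(\gamma)$ is the $k$-th symmetric power of $\gamma$). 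Hence $V$ is stable under monodromy. Continuing the coefficients of $L$ along the same loop produces a monic order $k+1$ operator again annihilating $V$, which by uniqueness must be $L$ itself; therefore the $a_{j}$ are monodromy--invariant, i.e. single--valued meromorphic functions on $U$.

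\textbf{Descent.} Using the $q$-expansions of $F$ and $t$ at the cusps and the local expansions of $t$ near elliptic points and near the zeros of $W$, one checks that each $a_{j}$ has at worst a pole at every deleted point, so it extends to a meromorphic function on the compact curve $X=\overline{\Gamma\backslash\mathfrak{H}}$. Since $t$ is nonconstant it defines a finite morphism $X\to\mathbb{P}^{1}$, so $\mathbb{C}(X)$ is finite over $\mathbb{C}(t)$ and every $a_{j}\in\mathbb{C}(X)$ is algebraic over $\mathbb{C}(t)$; this gives the first assertion. If $X$ has genus $0$ and $t$ generates the field of modular functions, then $\mathbb{C}(X)=\mathbb{C}(t)$, the $a_{j}$ are rational functions of $t$, and multiplying $L$ by a common denominator in $\mathbb{C}[t]$ yields an equivalent equation with polynomial coefficients. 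I expect the only delicate point to be this last step: verifying that the $a_{j}$, obtained as ratios of Wronskians, stay meromorphic (acquiring neither essential singularities nor branching) at the cusps and at the ramification points of $t$. Everything else is the symmetric--power bookkeeping of the monodromy step together with the chain rule for $d/d\tau$ versus $d/dt$.
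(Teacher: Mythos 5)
The paper offers no proof of this theorem --- it is quoted as a known result from the cited references of Stiller and Yang --- so there is nothing in-paper to compare against; your argument is the standard one from those sources (linear independence, Wronskian construction of the monic operator $L$, stability of the span under the symmetric-power monodromy representation, descent of the single-valued coefficients to the compact modular curve), and it is correct in outline and in detail. Two points deserve to be made explicit rather than implicit. First, defining $L$ by the Wronskian quotient requires $W(F,\tau F,\ldots,\tau^{k}F)\not\equiv 0$; linear independence alone does not guarantee this for arbitrary smooth functions, though it does for analytic ones, and the cleanest route is the classical identity
\[
W\bigl(F,\tau F,\ldots,\tau^{k}F\bigr)=c\,F^{k+1}\left(\frac{dt}{d\tau}\right)^{-k(k+1)/2},\qquad c=\prod_{j=1}^{k}j!\neq 0,
\]
which also makes the zero set of $W$ transparent. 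Second, the meromorphy of the $a_{j}$ at the deleted points --- which you correctly flag as the delicate step but do not carry out --- follows most directly from the observation that every Wronskian minor is a polynomial in the $\tau^{i}F$, their $\tau$-derivatives, and powers of $(dt/d\tau)^{-1}$, all of which admit meromorphic $q$-expansions at each cusp and finite-order local expansions at elliptic points and at ramification points of $t$; hence the monodromy-invariant ratios $a_{j}$ are meromorphic modular functions and lie in $\mathbb{C}(X)$, completing the descent. With those two points filled in, your proof is complete.
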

Therefore, from \eqref{deqS},
\begin{align}
  \label{eq:19}
  Z = \sum_{k=0}^{\infty} a(n) T^{n},\qquad  |T| < \frac{5 \sqrt{5} -11}{8},
\end{align}
where $a(n)$ is recursively determined from \eqref{deqS}, and
expressible in closed form \cite{MR2981931} in terms of the summand
appearing in \eqref{five}.
The final ingredient needed for Ramanujan-Sato series
at level 5 are explicit evaluations for the Rogers-Ramanujan
continued fraction within the radius of convergence of the power
series. Such singular values for $R(\tau)$ were given by Ramanujan in
his first letter to Hardy \cite{MR1353909} and can be derived from modular equations satisfied by $T(\tau)$ and $T(n\tau)$. We
provide a general approach in Section \ref{singular}.

To formulate the analogous construction at level $13$, define $\mathcal{R} = \mathcal{R}(\tau)$ by \eqref{rr_prod} and
\begin{align}
  \mathcal{S}(\tau) = q \prod_{n=1}^{\infty} \frac{(1 -q^{13n})^{2}}{(1-q^{n})^{2}},\qquad  \mathcal{T}(\tau) =
  \frac{\mathcal{R}(1 - 3\mathcal{R} - \mathcal{R}^{2})}{(1 +
  \mathcal{R}^{2})^{2}}.
\end{align}
A third order linear differential equation  \cite{MR3324566} is
satisfied by the Eisenstein series $ \mathcal{Z}(\tau) = \theta_{q}
\log \mathcal{S}$ with coefficients that are polynomials in the
$\Gamma_{0}(13)$ invariant  function $\mathcal{T}(\tau)$.
For both the level 5 and 13 cases, the weight zero functions $T$ and
$\mathcal{T}$ may be uniformly presented as the quotient of a weight $4$ cusp
form and the square of a weight $2$ Eisenstein series 
\begin{align}
  \label{eq:8}
  \mathcal{T} = \frac{\mathcal{U} \mathcal{V}}{\mathcal{Z}^{2}},
  \qquad T = \frac{U V}{Z^{2}},
\end{align}
where $\mathcal{U}(\tau) = \theta_{q} \log \mathcal{R}$, $U(\tau) =\theta_{q} \log R$,
\begin{align}
  \label{eq:7}
 \mathcal{V}(\tau) = \sum_{n=1}^{\infty} \left ( \frac{n}{13} \right)
  \frac{q^{n}}{(1 - q^{n})^{2}}, \quad  V(\tau) = \sum_{n=1}^{\infty} \left ( \frac{n}{5} \right)
  \frac{q^{n}}{(1 - q^{n})^{2}}.
\end{align}

Both levels require singular values for $\mathcal{T},T$ \cite{MR3173188}. Explicit evaluations for $\mathcal{Z}$ and $Z$ follow from 
\begin{align}
  \label{eq:11}
 \mathcal{W} = \frac{\theta_{q} \log \mathcal{T}}{\mathcal{Z}} = \sqrt{1 - 12 \mathcal{T} - 16\mathcal{T}^{2}}, \qquad
  W = \frac{\theta_{q} \log T}{Z} = \sqrt{1-44T +16T^{2}}.
\end{align}

The pairs $(\mathcal{T}, \mathcal{W})$, $(T,W)$, respectively, generate the field of invariant functions for $\Gamma_{0}(13)$, $\Gamma_{0}(5)$, and $r$ and $s$  generate invariant
function fields for the congruence subgroup $\Gamma_{0}(17)$. 
\begin{proposition} \label{prop_level_gens} Let $A_{0}(\Gamma)$ denote the field of functions
  invariant under $\Gamma$
 and denote by $\chi$ the real quadratic character modulo
  $p$. Let $\Gamma_{0}^{2}(p)$ denote a index two subgroup of $\Gamma_{0}(p)$ by
  $$\Gamma_0^2(p) = \{ \smat abcd \in \op{SL}_2(\bbZ) \mid c \equiv 0 \bmod p \mbox{ and } \chi(d)=1 \}\text{.}$$
    Then 
\begin{enumerate}
 \item $A_{0}(\Gamma_{0}(5)+) = \mathbb{C}(T)$ and $A_0(\Gamma_0(5)) = \mathbb{C}(T,W)$.
\item $A_{0}(\Gamma_{0}(13)+) = \mathbb{C}(\mathcal{T})$ and
  $A_0(\Gamma_0(13)) = \mathbb{C}(\mathcal{T},\mathcal{W})$.
\item For prime $p\equiv 1 \pmod{4}$, $A_{0}(\Gamma_{0}^{2}(p)) = \mathbb{C}(R_{p}, S_{p})$, where
  \begin{align}
    \label{eq:12}
    R_{p} = q^{\ell_{p}} \prod_{n=1}^{\infty} (1 - q^{n})^{\chi(n)},
    \qquad S_{p} = q^{a_{p}} \prod_{n=1}^{\infty} \frac{(1 -
    q^{np})^{b_{p}}}{(1 - q^{n})^{b_{p}}}, \\ 
\ell_{p} = \sum_{n=1}^{\frac{p-1}{2}} \frac{n(n-p)}{2 p} \chi(n), \qquad \frac{p-1}{24} =
    \frac{a_{p}}{b_{p}}, \qquad \gcd(a_{p}, b_{p}) = 1. 
  \end{align}

\end{enumerate}
\end{proposition}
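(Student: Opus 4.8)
The plan is to establish each of the three assertions by the standard dimension-counting / ramification argument for modular curves, treating parts (1) and (2) as warm-ups for the general claim in part (3). For part (3), I would first verify that $R_p$ and $S_p$ are genuinely modular functions on $\Gamma_0^2(p)$: the exponent $\ell_p$ is precisely the value forced by the transformation law of the Dedekind eta product $\prod (1-q^n)^{\chi(n)}$ (a standard computation with generalized Dedekind sums / the Siegel functions attached to the even character $\chi$, using that $p \equiv 1 \bmod 4$ so $\chi$ is even), while the eta quotient $S_p$ is handled by the Ligozat-type criterion for $\eta(\tau)^{b_p}\eta(p\tau)^{-b_p}$ scaled by $q^{a_p}$, with $a_p/b_p = (p-1)/24$ exactly the balancing condition at the two cusps $0$ and $\infty$ of $X_0(p)$. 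One then checks neither function is $\Gamma_0(p)$-invariant (they pick up a sign under a coset representative of $\Gamma_0(p)/\Gamma_0^2(p)$), so both live on $\Gamma_0^2(p)$ but not below it.

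Next I would show $\mathbb{C}(R_p,S_p) \subseteq A_0(\Gamma_0^2(p))$ is actually an equality by a degree argument. The index $[\Gamma_0(p):\Gamma_0^2(p)] = 2$, so $[\overline{\Gamma_0^2(p)}\backslash\mathbb{H} : X_0(p)] = 2$, and the covering $X(\Gamma_0^2(p)) \to X_0(p)$ is ramified exactly where $\chi(d) = -1$ forces it — concretely at the elliptic points and at one of the two cusps, data which is read off from the character. Since $S_p$ alone (being an eta quotient with a zero only at one cusp and a pole only at the other) already generates a subfield of index equal to its total pole order, and $R_p$ is not a rational function of $S_p$ over $\mathbb{C}$ (it has different divisor support, vanishing at interior points dictated by $\chi$), the compositum $\mathbb{C}(R_p,S_p)$ has transcendence degree one and function-field degree over $\mathbb{C}(S_p)$ at least $2$; combined with $[A_0(\Gamma_0^2(p)):\mathbb{C}(S_p)] = [A_0(\Gamma_0^2(p)):A_0(\Gamma_0(p)+)\text{ or }A_0(\Gamma_0(p))]\cdot[\cdots]$ being exactly what the ramification computation predicts, the inclusion is forced to be equality. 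For $p = 17$ one can make this completely explicit: $X_0(17)$ has genus one, $\Gamma_0^2(17)\backslash\mathbb{H}$ has genus zero (the index-two cover unramified-outside-cusps kills the genus), so $A_0(\Gamma_0^2(17)) = \mathbb{C}(h)$ is rational and one exhibits a polynomial relation $P(R_{17},S_{17}) = 0$ realizing $h$ as a rational function of the pair — this is consistent with $r,s$ being the claimed generators.

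For parts (1) and (2) the argument is the same but easier since $X_0(5)$ and $X_0(13)$ already have genus zero: $A_0(\Gamma_0(5)+) = \mathbb{C}(T)$ and $A_0(\Gamma_0(13)+) = \mathbb{C}(\mathcal{T})$ because $T$ and $\mathcal{T}$ are hauptmoduln for the respective Fricke groups (one checks $T$ has a single simple pole on $X_0(5)+$ and is invariant under the Atkin-Lehner involution, using the formulas \eqref{eq:8}, \eqref{eq:11}); then $\mathbb{C}(T,W) = A_0(\Gamma_0(5))$ follows from $W^2 = 1 - 44T + 16T^2$ being the degree-two relation cutting out $X_0(5) \to X_0(5)+$, and similarly $\mathcal{W}^2 = 1 - 12\mathcal{T} - 16\mathcal{T}^2$ for level $13$ — these are exactly equations \eqref{eq:11}. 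The main obstacle is the verification in part (3) that the specific exponents $\ell_p$ and $a_p/b_p = (p-1)/24$ are the \emph{correct} ones making $R_p, S_p$ holomorphic modular functions (not just modular forms with character) on $\Gamma_0^2(p)$ — i.e. pinning down the eta-transformation bookkeeping and the cusp-width computation at $0$ versus $\infty$ — together with showing $R_p$ is not already a rational function of $S_p$, which requires genuinely locating the interior divisor of $R_p$ from the character $\chi$ rather than from a naive $q$-expansion.
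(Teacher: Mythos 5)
Your treatment of parts (1) and (2) --- exhibiting $T$ and $\mathcal{T}$ as Hauptmoduln for the Fricke groups and then passing to the index-two extension via the square-root relations \eqref{eq:11} --- is essentially what the paper intends: it says these parts ``may be given along the lines of the proof of Proposition \ref{prop_level17_gens}'', which is exactly this divisor-plus-index-two scheme. For part (3), however, the paper gives no proof at all; it is quoted as a main result of \cite{MR3544515}. Your attempt to prove (3) from scratch is therefore more ambitious than the paper, but the argument you sketch rests on incorrect ramification data and would not go through as written.

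Concretely: the double cover $X(\Gamma_0^2(p)) \to X_0(p)$ is \emph{not} ramified at either cusp, and for $p=17$ it is not ramified at the elliptic points either. The stabilizers of the cusps $\infty$ and $0$ of $\Gamma_0(p)$ are generated (up to sign) by $\smat 1101$ and $\smat 10p1$, both with $d=1$ and $\chi(1)=1$, hence both lie in $\Gamma_0^2(p)$ and both cusps split. An order-two elliptic element of $\Gamma_0(p)$ has the form $\smat ab{pc}{-a}$ with $a^2\equiv -1 \pmod p$, so $\chi(-a)=\chi(a)=(-1)^{(p-1)/4}$, which is $+1$ whenever $p\equiv 1 \pmod 8$ --- in particular for $p=17$. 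The cover is therefore unramified everywhere, and Riemann--Hurwitz forces $X(\Gamma_0^2(17))$ to have genus $1$, not genus $0$; a nonconstant map of compact Riemann surfaces can never decrease the genus, so the parenthetical claim that the index-two cover ``kills the genus'' is backwards. It follows that $A_0(\Gamma_0^2(17))$ is not $\mathbb{C}(h)$ for any single function $h$, and the ensuing degree count (``index equal to total pole order \ldots\ forced to be equality''), which is in any case asserted rather than carried out, collapses. The proposition itself survives because a genus-one function field is generated by two elements satisfying an algebraic relation of low degree, but proving that $r$ and $s$ (equivalently $R_p$, $S_p$) are such generators requires the explicit divisor analysis of \cite{MR3544515} rather than a Hauptmodul argument. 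Your verification that $\ell_p$ and $a_p/b_p=(p-1)/24$ are the correct normalizing exponents is fine and is indeed the routine part.
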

A proof of the first two parts of Proposition \ref{prop_level_gens} may be given along the
lines of the proof of Proposition \ref{prop_level17_gens}. The third
part of the Proposition is a main result of \cite{MR3544515}. 
The results of \cite{MR3544515} explain Ramanujan's level $5$ reciprocal
relation \eqref{s} and his level
$13$ reciprocal relation \cite[Equation (8.4)]{berndt91}
\begin{align}
  \frac{1}{\mathcal{R}} - 3 - \mathcal{R} = \frac{1}{\mathcal{S}}.
\end{align}
For our present work at level $17$, we apply a new identity proven in
\cite{MR3544515}
\begin{align} \label{sn}
r+\frac{1}{r}-2\sqrt{\frac{4}{r} - 4r - 15} =  \frac{1}{s}.
\end{align}
Our next task is to construct functions analogous to $T$
and $W$ in terms of $r, s$ and Eisenstein
series. 

\section{Functions invariant under $\Gamma_0(17)$ and a differential
  equation} \label{i_d}

In this section we prove an analogue to Proposition \ref{prop_level_gens}  and derive a second order linear differential equation for $z$ defined by \eqref{eq:2} with
coefficents in $\Bbb C(x)$, where $x$ is defined by \eqref{eq:x}.
In order to construct functions that are invariant under
$\Gamma_0(17)$, we introduce sums of eight Eisenstein series
considered in \cite{symmetry}. Set
\begin{align}
  \label{eq:13}
  \mathcal{E}_{1}(\tau):= \frac{1}{8}\sum_{\chi(-1) = -1} E_{\chi,k}(\tau),\qquad E_{\chi,k}(\tau) = 1 +  \frac{2}{L(1 - k, \chi)} \sum_{n=1}^{\infty} \chi(n) \frac{n^{k-1} q^{n}}{1 - q^{n}},
\end{align}
where the sum in \eqref{eq:13} is over the odd primitive Dirichlet characters modulo $17$ and $L(1 - k, \chi)$ is the analytic continuation of the associated
Dirichlet $L$-series and $\chi(-1) = (-1)^{k}$.
For $a \in (\Bbb Z / 17 \Bbb Z)^{*}$, apply the diamond operator \cite{MR2112196} to define, for  $1 \le k \le 8$, 
\begin{align} \allowdisplaybreaks
  {\langle a \rangle}  \mathcal{E}_{1}(\tau) = \frac{1}{8}\sum_{\chi(-1) = -1}
  \chi(a)E_{\chi,1}(\tau), \qquad \mathcal{E}_{k}(\tau)
= \pm {\langle 3 \rangle}^{k-1} \mathcal{E}_{1}(\tau)\text{.} \label{3}
\end{align}
The sign in Equation \eqref{3} is chosen so that the first
coefficient in the $q-$series expansion is $1$. The parameters $
\mathcal{E}_{k}(\tau)$ have the product
representations \cite[Theorems 3.1-3.5]{symmetry}
\begin{equation} \allowdisplaybreaks
\label{def_Es}
\begin{alignedat}{5}
 \mathcal{E}_{1}(\tau) &= \ \ \ \left( \begin{array}{c} q^{8},q^{ 9},q^{17},q^{17} \\ q^{2},q^{3},q^{14},q^{15}  \end{array} ; q^{17} \right)_{\infty}\text{,} \quad
&\mathcal{E}_{2}(\tau) &= q \left( \begin{array}{c} q^{3},q^{14},q^{17},q^{17} \\ q,q^{5},q^{12},q^{16}  \end{array} ; q^{17} \right)_{\infty}\text{,} \\
 \mathcal{E}_{3}(\tau) &= q^3 \left( \begin{array}{c} q,q^{16},q^{17},q^{17} \\ q^{4},q^{6},q^{11},q^{13}  \end{array} ; q^{17} \right)_{\infty}\text{,} \quad
&\mathcal{E}_{4}(\tau) &= q \left( \begin{array}{c} q^{6},q^{11},q^{17},q^{17} \\ q^{2},q^{7},q^{10},q^{15}  \end{array} ; q^{17} \right)_{\infty}\text{,} \\
 \mathcal{E}_{5}(\tau) &= q^3 \left( \begin{array}{c} q^{2},q^{15},q^{17},q^{17} \\ q^{5},q^{8},q^{ 9},q^{12}  \end{array} ; q^{17} \right)_{\infty}\text{,}\quad
&\mathcal{E}_{6}(\tau) &= q \left( \begin{array}{c} q^{5},q^{12},q^{17},q^{17} \\ q^{3},q^{4},q^{13},q^{14}  \end{array} ; q^{17} \right)_{\infty}\text{,} \\
 \mathcal{E}_{7}(\tau) &= q \left( \begin{array}{c} q^{4},q^{13},q^{17},q^{17} \\ q,q^{7},q^{10},q^{16}  \end{array} ; q^{17} \right)_{\infty}\text{,}\quad
&\mathcal{E}_{8}(\tau) &= q^2 \left( \begin{array}{c} q^{7},q^{10},q^{17},q^{17} \\ q^{6},q^{8},q^{ 9},q^{11}  \end{array} ; q^{17} \right)_{\infty}\text{,}
\end{alignedat}
\end{equation}
\begin{align*}
   \left( \begin{array}{c} a_{1} ,\ldots, a_{m} \\ b_{1}, \ldots, b_{n}   \end{array} ; z \right)_{\infty} = \prod_{n=1}^{\infty} \frac{(a_{1}; z)_{\infty} \cdots (a_{m}; z)_{\infty}}{(b_{1}; z)_{\infty} \cdots (b_{n}; z)_{\infty}}, \quad (a; z)_{\infty} = \prod_{n=0}^{\infty} (1 - az^{n}).
\end{align*}
A function $\Omega$ is now introduced as a level $17$
analogue to the level 5 cusp form $UV$.  Define
\begin{equation}
\label{def_xw}
\begin{alignedat}{3} 
\Omega(\tau) &=  \mathcal{E}_1 \mathcal{E}_2
-\mathcal{E}_2 \mathcal{E}_3
+\mathcal{E}_3 \mathcal{E}_4
-\mathcal{E}_4 \mathcal{E}_5
+\mathcal{E}_5 \mathcal{E}_6
-\mathcal{E}_6 \mathcal{E}_7
-\mathcal{E}_7 \mathcal{E}_8
-\mathcal{E}_8 \mathcal{E}_1\text{.}\\
\end{alignedat} 
\end{equation}

Proposition \ref{prop_level17_basis} demonstrates that the weight two parameters $z$ and $\Omega$, respectively, play
a role at level 17 analogous to that played by the parameters $Z$ and $UV$ at level 5. 

\begin{proposition}
\label{prop_level17_basis}
Let $z= z(\tau)$ be defined by \eqref{eq:2}. Then
\begin{enumerate}
 \item The Eisenstein space of weight two $E_2(\Gamma_0(17))$ is generated by $z$.
 \item The space of cusp forms of weight two $S_2(\Gamma_0(17))$ is generated by $\Omega$.
 \item Both $z$ and $\Omega$ change sign under $|_{W_{17},2}$, where  $f |_{W_{17},k}(\tau) =  17^{-k/2} \tau^{-k} f (-1/17\tau) \text{.}$
 \item Both $z$ and $\Omega$ have zeros at the elliptic points $\rho_{\pm}$, and in the case of $\Omega$, the zeros are simple.
\end{enumerate}
\end{proposition}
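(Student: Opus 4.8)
The plan is to identify $z$ and $\Omega$ explicitly as classical modular forms and then read off the four assertions from the dimension and valence formulas for $\Gamma_0(17)$. Writing $s=\eta(17\tau)^3/\eta(\tau)^3$ and using $\theta_q\log\eta(\tau)=\tfrac1{24}E_2(\tau)$ with $E_2(\tau)=1-24\sum_{n\ge1}\sigma_1(n)q^n$, one finds $z=\theta_q\log s=\tfrac18\bigl(17E_2(17\tau)-E_2(\tau)\bigr)$. Since $pE_2(p\tau)-E_2(\tau)$ is, up to a scalar, the weight-two Eisenstein series spanning the Eisenstein subspace $E_2(\Gamma_0(p))$ for $p$ prime — which is one-dimensional for $p=17$ since $X_0(17)$ has two cusps — and its constant term $\tfrac18(17-1)=2$ is nonzero, $z$ is a nonzero Eisenstein form and hence generates $E_2(\Gamma_0(17))$. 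This proves part (1).

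For part (3) I would treat $z$ directly: $\eta(-1/\tau)=\sqrt{-i\tau}\,\eta(\tau)$ gives $s(-1/(17\tau))=17^{-3/2}s(\tau)^{-1}$, and differentiating $\log$ and applying the chain rule to the logarithmic derivative yields $z(-1/(17\tau))=-17\tau^2z(\tau)$, i.e. $z|_{W_{17},2}=-z$ (equivalently one may use the quasi-modular transformation of $E_2$). For $\Omega$ I would invoke the transformation formulas of the eight products $\mathcal E_k$ under $\tau\mapsto-1/(17\tau)$ recorded in \cite{symmetry}: the Fricke involution permutes the $\mathcal E_k$ up to signs and the weight-one automorphy factor, and substituting this permutation into the definition of $\Omega$ and collecting terms gives $\Omega|_{W_{17},2}=-\Omega$.

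Part (2) then follows. Each $\mathcal E_k$ is a holomorphic weight-one Eisenstein series, so $\Omega$ is holomorphic of weight two on $\bbH$; from the product representations \eqref{def_Es} the diamond operator $\langle3\rangle$ (of order $8$, as $3$ is a primitive root mod $17$) cyclically permutes the $\mathcal E_k$ up to sign, which exhibits $\Omega$ as a $\langle3\rangle$-orbit sum and hence $\Gamma_0(17)$-invariant. The leading exponents in \eqref{def_Es} give $\Omega=q+O(q^2)$, so $v_\infty(\Omega)=1$; combined with part (3) and the fact that $W_{17}$ interchanges the cusps $0$ and $\infty$, also $v_0(\Omega)=1$. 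Hence $\Omega\in S_2(\Gamma_0(17))$, it is nonzero, and $\dim S_2(\Gamma_0(17))$ equals the genus of $X_0(17)$, which is $1$, so $\Omega$ generates $S_2(\Gamma_0(17))$.

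For part (4): $\Gamma_0(17)$ has exactly $\nu_2=1+\bigl(\tfrac{-1}{17}\bigr)=2$ elliptic points, all of order two since $\nu_3=1+\bigl(\tfrac{-3}{17}\bigr)=0$, namely $\rho_\pm$. If $\gamma\in\Gamma_0(17)$ generates the order-two stabilizer of such a point $\rho$, then $\gamma$ is conjugate to a rotation of order two, so its automorphy factor satisfies $(c\rho+d)^2=-1$; thus every $f\in M_2(\Gamma_0(17))$ (trivial character) obeys $f(\rho)=(c\rho+d)^{-2}f(\gamma\rho)=-f(\rho)$, forcing $f(\rho)=0$. Applied to $f=z$ and $f=\Omega$ this gives the vanishing. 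For the simplicity of the zeros of $\Omega$, apply the weight-two valence formula on $\Gamma_0(17)$, whose total mass is $\tfrac{2}{12}[\mathrm{SL}_2(\bbZ):\Gamma_0(17)]=\tfrac{2}{12}\cdot18=3$: the cusp form $\Omega$ contributes $1$ at each of the two cusps and at least $1$ at each of $\rho_\pm$, the latter counted with weight $\tfrac12$, and $1+1+\tfrac12+\tfrac12=3$ already exhausts the mass, so the orders at $\rho_\pm$ equal one and there are no further zeros. The main obstacle is the bookkeeping in parts (2) and (3): one needs the precise action of the diamond operators and of the Fricke involution on the eight generalized-eta products $\mathcal E_k$ from \cite{symmetry} to establish both the $\Gamma_0(17)$-invariance of $\Omega$ and the $W_{17}$-eigenvalue $-1$; everything else reduces to the identification of $z$ and to standard dimension and valence formulas.
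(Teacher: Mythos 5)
Your proof is correct, and in two places it takes a genuinely different route from the paper. For part (1) the paper never identifies $z$ with a classical Eisenstein series: it shows $z$ and $\Omega$ are weight-two forms on $\Gamma_0(17)$ via the diamond-operator relations \eqref{g}--\eqref{t}, checks linear independence from $q$-expansions, and invokes the dimension formula; your identification $z=\tfrac18\bigl(17E_2(17\tau)-E_2(\tau)\bigr)$ gives (1) directly and also yields the $W_{17}$-eigenvalue $-1$ for $z$ for free from the eta transformation. The largest divergence is part (4): the paper gets the vanishing at $\rho_{\pm}$ from the valence formula combined with a \emph{numerical} check that the fixed point of $W_{17}$ is not a zero of $\Omega$ and a pairing argument for zeros under the involution, whereas your observation that the automorphy factor at an order-two elliptic point squares to $-1$ (so every form in $M_2(\Gamma_0(17))$ vanishes there) is cleaner, applies uniformly to $z$ and $\Omega$, and removes the numerical step; the valence formula then forces simplicity exactly as you say. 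The one point where both arguments lean on outside computation is the sign in $\Omega|_{W_{17},2}=-\Omega$: you defer to the Fricke transformation of the $\mathcal{E}_k$ in the cited reference, while the paper only remarks that $W_{17}$ normalizes $\Gamma_0(17)$ (which makes $\Omega$ an eigenvector since $\dim S_2=1$ but does not determine the sign); note that for your proof of (2) this dependence can be avoided entirely by observing that $\Omega$ is holomorphic at all cusps with vanishing constant term at $\infty$, hence has trivial Eisenstein component and lies in $S_2(\Gamma_0(17))$.
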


\begin{proof}
From \eqref{3} and the definition of the $\mathcal{E}_{i}$, if
arithmetic is performed modulo $8$ on the
subscripts,
\begin{align} \label{g}
  \langle 3\rangle \mathcal{E}_k = \epsilon_k \mathcal{E}_{k+1}, \qquad 
\epsilon_1, \dots, \epsilon_8 = +1,-1,+1,-1,+1,-1,-1,-1.
\end{align}
This, coupled with the transformation formula for Eisenstein series, 
\begin{align} \label{t}
   \mathcal{E}_k \left ( \frac{a \tau + b}{c \tau +d} \right ) = (c \tau + d) \cdot \langle a\rangle \mathcal{E}_k(\tau), \qquad   \begin{pmatrix}
a & b \\ c & d     
   \end{pmatrix} \in \Gamma_{0}(17)
\end{align}
implies that $\Omega$ and $z$ are modular forms of weight two with
respect to $\Gamma_{0}(17)$. From their $q$-expansions, we deduce that $\Omega$ and $z$ are
linearly independent over $\Bbb C$. Therefore, from dimension formulas
for the respective vector spaces \cite{MR2112196}, we see that these parameters
generate the vector space of weight two forms for
$\Gamma_{0}(17)$.  Thus, we obtain the first two claims of Proposition \ref{prop_level17_basis}. The third claim follows from the fact that $W_{17}$ normalizes $\Gamma_0(17)$.

As fundamental domain for $\mathbb{H}/\Gamma_0(17)$ we take
$\bigcup_{k=-8}^{8} F_k(D) \cup D$, where $D$ is the usual fundamental
domain for the full modular group and $F_k(\tau)=\frac{-1}{\tau+k}$.
The two elliptic points of order $2$ are $\rho_{\pm} = F_{\pm4}(i)$. Since $\mathbb{H}/\Gamma_0(17)$ has two elliptic points of order $2$ and two cusps, the valence formula for a weight $k$ modular form $f$ on $\Gamma_0(17)$ reads as
\begin{equation*}
\operatorname{ord}_{\infty}{f}+\operatorname{ord}_{0}{f}+\frac{\operatorname{ord}_{\rho_{+}}{f}}{2}+\frac{\operatorname{ord}_{\rho_{-}}{f}}{2} + \sum_{\tau \in (\mathbb{H} - \{\rho_{\pm}\})/\Gamma_{0}(17) } {\operatorname{ord}_{\tau}{f}} =\frac{k}{12} \cdot 18
\end{equation*}
From the $q$-expansion and the fact that $\Omega$ changes sign under $|_{W_{17},2}$, we know that the two cusps are zeros of $\Omega$, so the valence formula for $f = \Omega$ reads as
\begin{equation*}
1+1+\frac{\operatorname{ord}_{\rho_{+}}{f}}{2}+\frac{\operatorname{ord}_{\rho_{-}}{f}}{2} + \sum_{\tau \in (\mathbb{H} - \{\rho_{\pm}\})/\Gamma_{0}(17) } {\operatorname{ord}_{\tau}{f}} = 3\text{.}
\end{equation*}
Since $\Omega$ changes sign under $|_{W_{17},2}$ and the fixed point of $W_{17}$ is not a zero (as one may check numerically), the zeros must come in pairs. Accordingly, the two other zeros must be the two elliptic points, and these are simple zeros. A similar argument gives the result for $z$.
\end{proof}

The cusp form and Eisenstein series from Proposition
\ref{prop_level17_basis} can now be used in the construction of a
$\Gamma_{0}(17)$ invariant function of the same form as $T, \mathcal{T}$ given by \eqref{eq:8}. Although the
representation for $x(\tau)$ given here appears to differ from that given in the introduction, we
ultimately demonstrate agreement of the two representations in
Proposition \ref{h}.  The parameters $x(\tau)$ and $w(\tau)$, defined in
Proposition \ref{prop_level17_gens}, play roles analogous to
corresponding parameters $T$ and $W$ in Proposition \ref{prop_level_gens}.

\begin{proposition}
\label{prop_level17_gens}
 If the Fricke involution is denoted $W_{17} = W_{17, 0}$ and $x$ and
 $w$ are defined by 
 \begin{align}
   \label{eq:21}
   x(\tau) = \frac{\Omega}{z}, \qquad  w(\tau) = \frac{2}{z} \theta_{q} \log x,
 \end{align}
\begin{enumerate}
 \item $x$ is invariant under $\Gamma_0(17)$ as well as $W_{17}$; and
 \item $x$ has two simple zeros on $\mathbb{H}/\Gamma_0(17)$ at the two cusps.
 \item The field of functions invariant under $\Gamma_0(17)$ and $W_{17}$ is $A_0(\langle \Gamma_0(17)+ \rangle) = \mathbb{C}(x)$.
 \item The field of functions invariant under $\Gamma_0(17)$ is given by $A_0(\Gamma_0(17)) = \mathbb{C}(x,w)$.
 \item The relation $w^2=-127 x^4-48 x^3-66 x^2-16 x+1$ holds.
\end{enumerate}
\end{proposition}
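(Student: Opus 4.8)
The plan is to establish the five claims in the order stated, bootstrapping from the structural information already collected in Proposition \ref{prop_level17_basis}. For (1), since $z$ and $\Omega$ both transform with weight two under $\Gamma_0(17)$, the quotient $x=\Omega/z$ is automatically $\Gamma_0(17)$-invariant; and since both $z$ and $\Omega$ change sign under $|_{W_{17},2}$ (claim (3) of Proposition \ref{prop_level17_basis}), the minus signs cancel in the quotient, so $x$ is also $W_{17}$-invariant. For (2), the valence-formula bookkeeping already done for $\Omega$ and $z$ tells us where each vanishes: $\Omega$ has simple zeros at the two cusps and simple zeros at $\rho_\pm$, while $z$ has zeros at $\rho_\pm$ (the ``similar argument'' at the end of that proof) but, being a nonzero multiple of an Eisenstein series, is nonvanishing at the cusps. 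Hence $x=\Omega/z$ has exactly two simple zeros on $\mathbb{H}/\Gamma_0(17)$, located at the two cusps, the elliptic zeros having cancelled. One should also check $x$ has no poles on $\mathbb H$ (again because $z$ only vanishes at $\rho_\pm$, matched by $\Omega$), so $x$ is a genuine nonconstant function on the genus-zero curve $X_0(17)+$ with a single simple pole — this is what gives (3).

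For (3) and (4) I would invoke the standard principle that on a genus-zero modular curve a Hauptmodul is any invariant function with a single simple pole. The quotient $X_0(17)/W_{17}=X_0(17)+$ has genus zero, $x$ is $\langle\Gamma_0(17),W_{17}\rangle$-invariant, and by (2) together with the pole count $x$ has exactly one simple pole there; hence $A_0(\Gamma_0(17)+)=\mathbb C(x)$, which is (3). For (4), $\Gamma_0(17)$ sits inside $\langle\Gamma_0(17),W_{17}\rangle$ with index two, so $A_0(\Gamma_0(17))$ is a degree-two extension of $\mathbb C(x)$; since $w=\tfrac2z\theta_q\log x$ is $\Gamma_0(17)$-invariant (being a ratio of the weight-two forms $2\theta_q\Omega\cdot z-2\Omega\cdot\theta_q z$ and $z^2$, up to a factor $1/x$) but is \emph{not} $W_{17}$-invariant — indeed $\theta_q\log x$ picks up a term from the $\tau\mapsto-1/(17\tau)$ substitution that flips its sign relative to $z$, so $w\mapsto -w$ under $W_{17}$ — the function $w$ generates that quadratic extension, giving $A_0(\Gamma_0(17))=\mathbb C(x,w)$. (This is the level-$17$ analogue of the fact that $W=\theta_q\log T/Z$ in \eqref{eq:11} generates $A_0(\Gamma_0(5))$ over $\mathbb C(T)$.)

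Claim (5), the explicit quartic $w^2=-127x^4-48x^3-66x^2-16x+1$, is where the real work lies, and I expect it to be the main obstacle. By (4), $w^2\in\mathbb C(x)$, and since $w^2$ is holomorphic on $\mathbb H$ and $x$ has a single simple pole (at the cusps), $w^2$ must be a polynomial in $x$; a weight and growth-rate estimate bounds its degree by $4$ (the pole order of $w^2$ at the cusp, computed from the $q$-expansions of $\Omega,z$ and $\theta_q\log x$, is $4$). So $w^2=P(x)$ for an explicit degree-$\le4$ polynomial $P$, and it remains to pin down its five coefficients. The cleanest route is to expand everything as a $q$-series: from the product formulas \eqref{def_Es} one gets $q$-expansions for $\mathcal E_1,\dots,\mathcal E_8$, hence for $\Omega$, $z=\theta_q\log s$ (with $s=q^2\prod(1-q^{17n})^3/(1-q^n)^3$), and then for $x$, $\theta_q\log x$, and $w$; matching the first five Taylor coefficients of $w^2$ against $c_0+c_1x+c_2x^2+c_3x^3+c_4x^4$ forces $(c_0,\dots,c_4)=(1,-16,-66,-48,-127)$. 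Care is needed because $x$ vanishes at the cusp, so the $q$-expansion of $x$ starts at a positive power of $q$ and one must carry enough terms; alternatively, one can locate the branch points of the double cover $\mathbb C(x,w)/\mathbb C(x)$ — these are the images under $x$ of the ramification points of $X_0(17)\to X_0(17)+$, i.e. the fixed points of $W_{17}$ on $X_0(17)$ together with any cusps/elliptic points where the covering ramifies — and match $P(x)$ to a polynomial with exactly those roots and the correct leading behavior. Either way the identity, once the correct polynomial is in hand, is verified rigorously by comparing sufficiently many coefficients of two holomorphic modular forms of the same (necessarily bounded) weight, so no further analytic estimates are needed beyond the initial degree bound.
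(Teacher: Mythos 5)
Your overall architecture matches the paper's: (1) and (2) from Proposition \ref{prop_level17_basis}, (3) as a Hauptmodul statement on the genus-zero quotient, (4) via the index-two extension generated by $w$ (which anti-commutes with $W_{17}$), and (5) by showing $w^2$ is a polynomial of degree at most $4$ in $x$ and matching $q$-expansions. However, there is a genuine error in your divisor bookkeeping for $x$ that undermines the argument. You assert that ``$x$ has no poles on $\mathbb{H}$'' and that $x$ has ``a single simple pole (at the cusps).'' Both are wrong, and the first already contradicts claim (2): a nonconstant meromorphic function on the compact curve $X_0(17)$ must have poles, and since $x$ has simple \emph{zeros} at both cusps (that is precisely claim (2)), its poles must lie in $\mathbb{H}$. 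Concretely, the valence formula forces the zero divisor of the weight-two form $z$ to have total degree $3$; the elliptic points $\rho_{\pm}$ account for only part of this, so $z$ has two further zeros $p_1$ and $p_2=W_{17}(p_1)$ in $\mathbb{H}$ at which $\Omega$ does not vanish ($\Omega$'s divisor is already exhausted by the cusps and $\rho_\pm$). These are the two simple poles of $x$, identified to a single simple pole on $X_0(17)+$ --- which is what actually makes $x$ a Hauptmodul there.

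The error propagates fatally into your proof of (5). You derive the degree bound from ``the pole order of $w^2$ at the cusp \dots is $4$,'' but $w^2$ has no pole at the cusps: since $x$ vanishes there and $z\to 2$, one computes $w\to 1$, consistent with $w^2=1-16x-\cdots$ at $x=0$. Run as written, your growth estimate at the cusp would give degree $0$, not $4$. The correct source of the bound is the interior points $p_1,p_2$: there $x$ has a simple pole, $\theta_q\log x$ a simple pole, and $z$ a simple zero, so $w$ has a double pole and $w^2$ a pole of order $4$, against the simple pole of $x$ --- this is the paper's argument that $w^2$ ``has the same set of poles as $x$, hence is a polynomial in $x$'' of degree at most $4$. (A smaller quibble: in (2) you say the elliptic zeros of $\Omega$ and $z$ ``cancel,'' which presumes $\operatorname{ord}_{\rho_\pm}z=1$, a fact the paper only establishes later via $x(\rho_\pm)=1$; for claim (2) itself you need only that $x$ has no zero at $\rho_\pm$, which holds since $\operatorname{ord}_{\rho_\pm}\Omega=1\le\operatorname{ord}_{\rho_\pm}z$.) With the pole locations corrected, the rest of your outline --- transitivity of the sign changes in (1), the index-two field extension in (4), and the coefficient determination by $q$-expansion in (5) --- goes through as in the paper.
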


\begin{proof}
The first two assertions follow directly from Proposition \ref{prop_level17_basis}. The third assertion is then a direct consequence of the first two. For the fourth assertion, the functions $x(\tau)$ and $w(\tau)$ are invariant under $\Gamma_0(17)$, so it suffices to show that they generate the whole field. Since $x$ has order $2$, we have $\left[A_0(\Gamma_0(17)): \mathbb{C}(x) \right] =2$. Since $w \not \in \mathbb{C}(x)$ because it changes sign under $W_{17}$, we must have $\left[A_0(\Gamma_0(17)): \mathbb{C}(x,w) \right] =1$, that is, the second assertion holds. For the final assertion, the function $w^2$ is fixed under $W_{17}$ and has the same set of poles as $x$, hence it is a polynomial in $x$. We bound the degree of this polynomial by $4$ and find its coefficients by comparing $q-$expansions.
\end{proof}

The parameter $x$ 
is expressible as the rational function of $r$
and $s$ appearing in the Introduction and in terms of the
McKay-Thompson series $17A$ \cite[Table 4A]{MR554399}.
\begin{proposition} \label{h}
Define $\eta(\tau) =
q^{1/24}(q;q)_{\infty}$, and let $x$ be defined
as in Proposition \ref{prop_level17_gens}. Then 
\begin{align}
x&=\frac{r(r^{2}s + 8rs- r - s)}{8r^{3}s-3r^{2}s +r-s}, \label{XX} \\ 
\frac{1-x}{2 x} &=\frac{1}{4\eta(\tau)^2 \eta(17\tau)^2}\left(\sum_{m,n=-\infty}^{\infty}{(e^{\pi i m}-e^{\pi i n})q^{\frac{1}{4}n^2+\frac{17}{4}m^2}}\right)^2.  \label{theta}
\end{align}
\end{proposition}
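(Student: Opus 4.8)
The plan is to prove the two identities \eqref{XX} and \eqref{theta} separately, in each case reducing a claimed equality of modular functions to a bounded comparison of $q$-expansions once the correct ambient function field has been pinned down.

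\emph{Proof of \eqref{XX}.} The right-hand side is a rational function of $r$ and $s$, hence lies in $A_0(\Gamma_0^2(17)) = \mathbb{C}(r,s)$ by Proposition \ref{prop_level_gens}(3) with $p = 17$ (the exponents in \eqref{eq:12} then become $\ell_{17} = 2$, $a_{17} = 2$, $b_{17} = 3$, matching the products defining $r$ and $s$). On the other hand $x = \Omega/z$ lies in $A_0(\Gamma_0(17)+) = \mathbb{C}(x) \subseteq \mathbb{C}(r,s)$ by Proposition \ref{prop_level17_gens}(3). So both sides belong to the function field $\mathbb{C}(r,s)$, and it is enough to verify that their difference has no poles — whence it is constant — and that this constant is zero; equivalently, to match $q$-expansions up to an order exceeding the total order of the pole divisor of the difference, which is bounded explicitly in terms of the zeros of the denominator $8r^3 s - 3r^2 s + r - s$, the poles of $r$ and $s$, and the zeros of $z$. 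Using the product expansions of $r$ and $s$ together with the series for $\Omega$ and $z$, one checks that the right-hand side equals $\tfrac q2 + O(q^2)$, matching the leading term of $x$; carrying the expansions out to the required order completes the identity. A shorter route is to record the action of the nontrivial coset of $\Gamma_0^2(17)$ in $\Gamma_0(17)$ and of $W_{17}$ on $r$ and $s$ (following \cite{MR3544515}; note $s = \bigl(\eta(17\tau)/\eta(\tau)\bigr)^3$ is already $\Gamma_0(17)$-invariant, whereas $r$ is not), verify that the right-hand side is fixed by both so that it lies in $\mathbb{C}(x)$, and then — since it has a simple zero at the cusp, exactly like $x$, and a single simple pole on $X_0(17)+$ — conclude that it is a M\"obius function of $x$, pinned down to be $x$ itself by two or three $q$-coefficients.

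\emph{Proof of \eqref{theta}.} Let $H(\tau)$ denote the right-hand side. Since $e^{\pi i k} = (-1)^k$, the inner sum is supported on pairs $(m,n)$ of opposite parity and equals, up to normalization, $2\bigl(\theta_2(\tau)\theta_3(17\tau) - \theta_3(\tau)\theta_2(17\tau)\bigr)$, a difference of products of Jacobi theta constants; this has weight $\tfrac12+\tfrac12 = 1$ on a congruence subgroup, so its square divided by the weight-two form $\eta(\tau)^2\eta(17\tau)^2$ is a modular function of weight zero. The crucial step is to show that $H$ is invariant under all of $\Gamma_0(17)$ and under $W_{17}$. Invariance under $\Gamma_0(17)$ follows from the standard transformation formulas for $\eta$ and the theta constants. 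Under $\tau \mapsto -1/(17\tau)$ the two terms of the antisymmetrized theta sum are interchanged up to an automorphy factor, and this factor is precisely cancelled by the transformation of $\eta(\tau)^2\eta(17\tau)^2$, so $H|_{W_{17}} = H$. Hence $H \in A_0(\Gamma_0(17)+) = \mathbb{C}(x)$ by Proposition \ref{prop_level17_gens}(3). From its $q$-expansion, $H$ has a simple pole at the cusp and is holomorphic elsewhere, so, like $\tfrac{1-x}{2x}$, it is a degree-one rational function of $x$; comparing the leading $q$-coefficients of $H$ with those of $\tfrac{1-x}{2x}$ (the latter computed either from the expansion of $x = \Omega/z$ or, via \eqref{XX}, as a rational function of $r$ and $s$) forces $H = \tfrac{1-x}{2x}$.

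The main obstacle, in both parts, is the Fricke involution $W_{17}$. For \eqref{XX} one must transcribe correctly the action of $W_{17}$ (and of the index-two coset) on the generalized eta product $r$, which is modular only for the index-two subgroup; for \eqref{theta} one must carry the transformation formulas for the theta constants at $\tau$ and $17\tau$ far enough to see that the automorphy factors match those of $\eta(\tau)^2\eta(17\tau)^2$ under $\tau \mapsto -1/(17\tau)$ — equivalently, to identify the weight-one antisymmetrized theta sum with an eta product on $\Gamma_0(17)$. Everything downstream of that is a finite, explicitly bounded $q$-series verification.
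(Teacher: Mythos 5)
Your overall strategy---pin down the ambient function field, then reduce to a finite $q$-expansion check---matches the paper's in spirit, and your treatment of \eqref{theta} is essentially the paper's own one-line argument (both sides are Hauptmoduln for $\Gamma_0(17)+$, so finitely many coefficients suffice), usefully fleshed out by identifying the antisymmetrized theta sum as a weight-one form. For \eqref{XX}, however, your route genuinely differs from the paper's: the paper never works on $X_0^2(17)$ at all. It first proves the two-variable polynomial relation \eqref{eq:16} between $x$ and $r$ \emph{alone}, by clearing denominators to obtain an identity of weight-$20$ holomorphic forms on $\Gamma_1(17)$ certified by the Sturm bound ($481$ coefficients), and only then introduces $s$ by substituting the already-established reciprocal identity \eqref{sn} for $\sqrt{4/r-4r-15}$. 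That maneuver is precisely what lets the paper avoid all divisor bookkeeping on the genus-positive curve $X_0^2(17)$; your route (a) instead requires an explicit bound on the polar divisor of $x-F(r,s)$ on that curve (degrees of $r$ and $s$ as maps to $\mathbb{P}^1$, zeros of the denominator $8r^3s-3r^2s+r-s$, zeros of $z$), which is exactly the part you leave implicit.

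There is also a concrete error that breaks your ``shorter route.'' The function $s=(\eta(17\tau)/\eta(\tau))^3$ is \emph{not} $\Gamma_0(17)$-invariant: by the standard Ligozat criterion this eta quotient transforms under $\smat abcd\in\Gamma_0(17)$ with the character $\left(\tfrac{17^3}{d}\right)=\left(\tfrac{d}{17}\right)$, which is nontrivial because the exponent $b_{17}=3$ in \eqref{eq:12} is odd (contrast levels $5$ and $13$, where the exponents $6$ and $2$ are even and $S$, $\mathcal S$ genuinely are $\Gamma_0$-invariant). So $s$ is invariant only under $\Gamma_0^2(17)$ and changes sign on the nontrivial coset, while $r\mapsto -1/r$ there by \eqref{tf} and \eqref{g}. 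Checking coset-invariance of the right-hand side of \eqref{XX} with $s\mapsto s$ would therefore be testing the wrong identity; and even with the corrected action $(r,s)\mapsto(-1/r,-s)$ the invariance fails as an identity of rational functions in two independent variables (e.g.\ at $s=0$ one side is $-r$ and the other $1/r$)---it holds only modulo the algebraic relation between $r$ and $s$ induced by \eqref{sn}. So the verification cannot be a formal substitution; it forces you back either to the curve $X_0^2(17)$ or to the paper's device of eliminating $s$ via \eqref{sn} before doing any modular-form comparison. Your longer route (a) is unaffected by this, since it uses only that both sides lie in $A_0(\Gamma_0^2(17))$, but it still needs the pole bound actually supplied before the $q$-expansion check is a proof.
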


\begin{proof}
From the product representation for $r$ and those for the
Eisenstein sums $\mathcal{E}_{i},$ from \eqref{def_Es}
\begin{align} \label{tf}
  r = \frac{\mathcal{E}_{1} \mathcal{E}_{3}
  \mathcal{E}_{5}\mathcal{E}_{7}}{\mathcal{E}_{2} \mathcal{E}_{4}
  \mathcal{E}_{6} \mathcal{E}_{8}}.
\end{align}
Therefore, $r$ is the quotient of weight four modular forms for $\Gamma_{1}(17)$, and $x = \Omega/z$ is the quotient of weight two modular forms for $\Gamma_{1}(17)$.
Hence, the quadratic relation between $x$ and $r$, 
\begin{align}
  \label{eq:16}
  \frac{4}{r} - 4r - 15 = \frac{(xr-1)^{2}(4r -1)^{2}}{(x+r)^{2}}
\end{align}
may be transcribed as a relation between modular forms of weight 20 for
$\Gamma_{1}(17)$ and proved from the Sturm bound by verifying the $q$-expansion to
order $481=1+20
\cdot 288/12 $. Then
\begin{align}
  \label{eq:17}
 \sqrt{ \frac{4}{r} - 4r - 15} = \frac{(xr-1)(4r -1)}{(x+r)},
\end{align}
where the branch of the square root is determined using the definition
of $x$ and $r$. Therefore, 
\begin{align}
  \label{eq:18}
  x = \frac{4r -1+ r \beta(r)}{4r^{2} - r - \beta(r)},\qquad \beta(r)
  =  \sqrt{ \frac{4}{r} - 4r - 15} .
\end{align}
The first equation of \eqref{eq:18} is seen to be equivalent to
\eqref{XX} by applying \eqref{sn}.
Equation \eqref{theta} may be derived from respective $q$-expansions since each side is a Hauptmodul for $\Gamma_{0}(17)+$. 
\end{proof}
It follows from the first part of Proposition \ref{prop_level17_gens}
and Theorem \ref{ex_deq} that $z$ satisfies a third order linear homogeneous differential
equation with coefficients in $\mathbb{C}(x)$. In order to formulate the
differential equation, we state the following preliminary nonlinear differential
equation in terms of the differential operator $\theta_q := q
\frac{q}{d q}$. This is written even more succinctly as $f_q:=\theta_q f$. 
\begin{lemma}
\label{lemma_Zq_DIFEQ}
 \begin{equation*}
  \frac{2 z z_{qq}-3 z_q^2}{3 z^4} =  \frac{x \left(127 x^5-222 x^4+126 x^3+4 x^2+27 x+2\right)}{4 (x-1)^2}
 \end{equation*}
\end{lemma}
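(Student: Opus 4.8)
The plan is to recognize the left side of the claimed identity as a Schwarzian-type combination that is modular of weight four with respect to $\Gamma_0(17)$, and hence expressible as a rational function of the Hauptmodul $x$ by Proposition \ref{prop_level17_gens}. Concretely, set $N(z) := 2z z_{qq} - 3z_q^2$. A direct computation with the operator $\theta_q$ shows that under a modular substitution $\tau \mapsto \gamma\tau$ with $\gamma = \smat abcd \in \Gamma_0(17)$, writing $z(\gamma\tau) = (c\tau+d)^2 z(\tau)$ (Proposition \ref{prop_level17_basis}), the inhomogeneous terms arising from differentiating the automorphy factor cancel in the specific combination $2z z_{qq} - 3z_q^2$, exactly as in the classical theory of the Schwarzian derivative; the upshot is that $N(z)$ transforms with weight $8$, so $N(z)/z^4$ is modular of weight $0$. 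Since $N(z)/z^4$ is therefore a meromorphic function on $\mathbb{H}/\Gamma_0(17)$, and since $x$ generates (together with $w$) that function field while $z$ has its only zeros at the two elliptic points $\rho_\pm$ (Proposition \ref{prop_level17_basis}(4)), the poles of $N(z)/z^4$ are controlled: away from $\rho_\pm$ it is holomorphic, and at $\rho_\pm$ one gets at worst a pole of order matching $z^{-4}$ times the local behavior of $N(z)$. Under $W_{17}$ both $z$ and $\Omega$ change sign, so $N(z)/z^4$ is $W_{17}$-invariant and hence lies in $\mathbb{C}(x)$.

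The next step is to pin down the rational function of $x$. The zeros and poles of $x$ are exactly the two cusps (Proposition \ref{prop_level17_gens}(2)), and $x-1$ vanishes doubly at the fixed point of $W_{17}$ (from $w^2 = -127x^4 - 48x^3 - 66x^2 - 16x + 1$, the branch points of the double cover, i.e. the zeros of the quartic, together with $x=1$ governing $\rho_\pm$; more precisely the elliptic points $\rho_\pm$ are where the local uniformizer degenerates, and tracking $w$ there shows $x-1$ enters to the relevant power). The claimed right side $\tfrac{x(127x^5 - 222x^4 + 126x^3 + 4x^2 + 27x + 2)}{4(x-1)^2}$ has a simple zero at $x=0$ (one cusp), a high-order pole structure compatible with the other cusp via the degree-$6$ numerator versus the $x$ in the denominator being absorbed, and a double pole at $x=1$. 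So I would argue: $N(z)/z^4$ has a pole of order at most $2$ at $\rho_\pm$ (since $z$ has simple zeros there and $N(z) = 2z z_{qq} - 3z_q^2$ has a zero of order $2$, giving net order $-2$ after dividing by $z^4$ — wait, one must check the cancellation carefully, as $z_q$ is generically nonvanishing there), forcing the $(x-1)^{2}$ in the denominator; and the behavior at the two cusps (where $z = 1 + O(q)$ at $\infty$ so $N(z)/z^4$ is finite there, and at $0$ one uses the $W_{17}$-image) constrains the numerator to be a polynomial in $x$ of bounded degree. This reduces the identity to a finite check.

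The hard part — the step I expect to be the main obstacle — is the local analysis at the elliptic points $\rho_\pm$: determining the exact order of vanishing of $2z z_{qq} - 3z_q^2$ there and confirming it produces precisely a double (not higher, not lower) pole for $N(z)/z^4$ after division, so that $(x-1)^2$ and no higher power appears. One clean way to handle this is to instead change variables to $x$ directly: since $z$ is a weight-two form and $x$ a weight-zero modular function, the classical identity relating $\theta_q$-Schwarzians expresses $\tfrac{2zz_{qq} - 3z_q^2}{z^4}$ as $-\tfrac{3}{2}$ times a rational combination of $w = \tfrac{2}{z}\theta_q\log x$ and its $x$-derivative, using $w^2 = -127x^4 - 48x^3 - 66x^2 - 16x + 1$ and $\theta_q x = \tfrac{1}{2} w z$; carrying this algebra through, $z$ and $z_q$ get eliminated in favor of $x$ and $w$, and then $w^2$ (and $\theta_q w$, computed by differentiating the quartic relation) is substituted to land in $\mathbb{C}(x)$. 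I would carry out the steps in this order: (1) establish modularity/weight of $N(z)$; (2) set up $\theta_q x = \tfrac12 wz$ and $\theta_q w$ from Proposition \ref{prop_level17_gens}(5); (3) rewrite $z_q, z_{qq}$ via $\theta_q\log z$ and express everything in $x, w$; (4) substitute the quartic for $w^2$ and simplify to obtain the stated rational function; (5) verify the normalization with the leading $q$-expansion coefficients of $z, x$. Step (3)–(4) is the computational heart, but it is the kind of calculation that a computer algebra check renders routine once the structural reductions in (1)–(2) are in place.
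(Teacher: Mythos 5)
Your overall strategy---show $N(z)/z^4$ is a weight-zero function invariant under $\Gamma_0(17)$ and $W_{17}$, hence in $\mathbb{C}(x)$, then pin down the rational function by local analysis and a finite check---is exactly the paper's route. But the local analysis, which you yourself flag as ``the main obstacle,'' contains two genuine errors that would derail the argument. First, at a simple zero $\tau_0$ of $z$ one has $z = c(\tau-\tau_0)+\cdots$ with $z_q$ \emph{nonvanishing} there, so $2zz_{qq}-3z_q^2$ is nonzero at $\tau_0$ and $N(z)/z^4$ has a pole of order exactly $4$, not $2$; your parenthetical ``wait'' notices the problem but you never resolve it. The cancellation against $(x-1)^2$ works not because the pole has order $2$ but because $x$ is locally a holomorphic function of $(\tau-\rho_\pm)^2$ at the elliptic points (the fundamental-domain side through $F_{\pm4}(i)$ is identified with itself), so $x-1$ vanishes to order $2$ in $\tau$ and $(x-1)^2$ kills an order-$4$ pole. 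Second, you assert that $\rho_\pm$ are the \emph{only} zeros of $z$; Proposition 3.1(4) does not say this, and the valence formula (total weight-$2$ zero count $3$, with the elliptic points contributing $1/2$ each) forces two further simple zeros $p_1$, $p_2=W_{17}(p_1)$, which are precisely the poles of $x$. These are where $N(z)/z^4$ has its genuine poles, and they are what make $(x-1)^2 f$ a polynomial of degree \emph{six} in $x$ (pole of order $4$ from $f$ plus $2$ from $(x-1)^2$ at each $p_i$, against the simple pole of $x$ there). Without them your degree bound and hence your ``finite check'' has no justification. Relatedly, your remark that $x-1$ vanishes at the fixed point of $W_{17}$ is wrong: $w^2=-256\neq 0$ at $x=1$, and the paper checks that the $W_{17}$-fixed point is not a zero of $\Omega$.

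The alternative ``cleaner'' route in your last paragraph does not work: the relations $\theta_q x=\tfrac12 xwz$ (note you dropped the factor of $x$) and $w^2=P(x)$ determine $x_q$ and $w_q$ in terms of $x,w,z$, but they give no formula for $z_{qq}$. The content of the lemma is precisely that $z_{qq}-\tfrac{3z_q^2}{2z}$ equals an explicit element of $\mathbb{C}(x)\cdot z^3$; this cannot be derived by eliminating variables from the first-order relations, so steps (3)--(4) of your plan are circular. Once the pole analysis above is corrected, the paper finishes by comparing $q$-expansions to order $7$ and invoking the valence formula to conclude the difference vanishes; that part of your plan is sound.
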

\begin{proof}
Let $f(\tau)$ denote the function on the left hand side of the proposed equality. If $z$ satsifies the functional equation
\begin{equation*}
 z \left( \frac{a \tau+b}{c \tau+d} \right) = \epsilon \frac{(c \tau+d)^2}{a d-b c} z(\tau)
\end{equation*}
 one can compute that 
\begin{equation*}
\frac{2 z z_{qq}-3 z_q^2}{3 z^4}  \left( \frac{a \tau+b}{c \tau+d} \right) = \frac{1}{\epsilon^{2}}  \frac{2 z z_{qq}-3 z_q^2}{3 z^4} ( \tau ) \text{.}
\end{equation*}
By Proposition \ref{prop_level17_basis}, we have $\epsilon=1$ for elements of $\Gamma_0(17)$ and $\epsilon=-1$ for $W_{17}$. Thus we see that $f(\tau)$ is invariant under $\Gamma_0(17)$ and $W_{17}$ in weight $0$. According to Theorem \ref{expl}, we see that $x$ does not have a pole at the two elliptic points, i.e. $x(\rho_{\pm})=1$. This means that the two zeros of $z$ at these elliptic points are both simple. Hence, $z$ has two other simple zeros $p_1$ and $p_2=W_{17}(p_1)$, which are also the poles of $x$, modulo $\Gamma_0(17)$, as observed in the proof of Proposition \ref{prop_level17_basis}. Since all of the poles are $z$ are simple, we can take the expansion
\begin{equation*}
 z(\tau) = c (\tau-\tau_{0})+ \dots
\end{equation*}
at the zeros $\tau_{0}=\rho_{+},\rho_{-},p_1,p_2$, where $c$ is non-zero. Each of these zeros contributes a quadruple pole to $f(\tau)$ since
\begin{equation*}
 \frac{2 z z_{qq}-3 z_q^2}{3 z^4}  (\tau) = \frac{3}{(2\pi c)^2 (\tau-\tau_{0})^4} + \cdots
\end{equation*}
 In the fundamental domain of $\mathbb{H}/\Gamma_0(17)$, the translate $F_{4}(D)$ is adjacent to itself. Thus $x(\tau)$ must identify the two halves of the corresponding side of $F_{4}(D)$ (the side that contains $F_4(i)$). Likewise for $F_{-4}$. Therefore, at the elliptic point $\rho_{\pm}$, the function $x(\tau)$ is locally a holomorphic function of $((\tau-\rho_{\pm})/(\tau-\bar{\rho_{\pm}}))^2$ so that
\begin{equation*}
 x(\tau) = 1 + c_{\pm} (\tau-\rho_{\pm})^2 + \cdots
\end{equation*}
We see now that $(x-1)^2 f$ has poles only at $p_{1}$ and $p_2$, each of order six. It is therefore a polynomial of degree six in $x$, and we can compute that
\begin{equation*}
 4 (x-1)^2 f - x (127 x^5-222 x^4+126 x^3+4 x^2+27 x+2) = O(q^7)\text{.}
\end{equation*}
The left hand side has poles of order $6$ at $p_{1}, p_{2}$ and zeros at least order $7$ at $0$ and $\infty$. This contradicts the valence formula unless the left hand side is constant.
\end{proof}

We now give the third order linear differential equation for $z$ with rational coefficients in $x$. The concise formulation of the differential equation in \eqref{thm_ZxDIFEQ_equ1} is motivated by the general form of such differential equations from \cite{MR743544,MR2031441}.
\begin{theorem}
 \label{thm_ZxDIFEQ}
With respect to the function $x$, the form $f=z$ satisfies the differential equation. 
\begin{align*}
 0 &= 3 x (254 x^6-714 x^5+681 x^4-250 x^3-6 x^2-28 x-1) f\\
& + x(x-1) (1397 x^5-2482 x^4+1094 x^3-28 x^2+197 x+14) f_{x}\\
& + 6 x (x-1)^3 (127 x^3+36 x^2+33 x+4) f_{xx}\\
& + (x-1)^3 (127 x^4+48 x^3+66 x^2+16 x-1) f_{xxx}\text{.}
\end{align*}
\end{theorem}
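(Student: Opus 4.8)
The plan is to derive the third-order linear differential equation from the second-order nonlinear relation in Lemma~\ref{lemma_Zq_DIFEQ}. The key observation is that the left-hand side of Lemma~\ref{lemma_Zq_DIFEQ} is essentially the Schwarzian-type combination $\tfrac{2zz_{qq}-3z_q^2}{3z^4}$, which is the weight-zero, $\Gamma_0(17)$-and-$W_{17}$-invariant object; call the right-hand side $P(x)/(4(x-1)^2)$ where $P(x)=x(127x^5-222x^4+126x^3+4x^2+27x+2)$. The general theory (Theorem~\ref{ex_deq} together with \cite{MR743544,MR2031441}) guarantees that $z$, being a weight-two form and $x$ a Hauptmodul for $\Gamma_0(17)+$, satisfies a third-order linear ODE in $x$ whose coefficients are polynomials in $x$; moreover the leading coefficient must vanish exactly where $\tfrac{dx}{d\tau}$ vanishes (the elliptic points, giving the factor $(x-1)^3$) and where $x$ has poles in a way dictated by the local structure. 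So the strategy is: change variables from $\theta_q$ to $\theta_x := x\tfrac{d}{dx}$ (equivalently $\tfrac{d}{dx}$), exploit that $w = \tfrac{2}{z}\theta_q\log x$ satisfies $w^2 = -127x^4-48x^3-66x^2-16x+1$ from Proposition~\ref{prop_level17_gens}(5), and express everything in terms of $x$ and this polynomial.

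Concretely, first I would record that $\theta_q x = \tfrac12 w z x$, which follows by definition of $w$. Differentiating again and using $\theta_q w = (w z/2)\tfrac{dw}{dx}\cdot x$ together with the algebraic relation $w^2 = Q(x)$ with $Q(x) := -127x^4-48x^3-66x^2-16x+1$ (so $2w\,w_x = Q'(x)$), one gets $\theta_q^2 x$ purely in terms of $x$, $z$, $w$, and $z_q/z$. This lets one convert $z_q, z_{qq}, z_{qqq}$ into $x$-derivatives of $z$: write $f_x = \tfrac{df}{dx}$, so $\theta_q f = (\theta_q x) f_x = \tfrac12 w z x f_x$, and iterate. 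At each stage, odd powers of $w$ get paired: the combination that survives in a genuinely $\Gamma_0(17)+$-invariant identity must involve only $w^2 = Q(x)$, so $w$ drops out of the final relation. Then Lemma~\ref{lemma_Zq_DIFEQ} becomes a \emph{relation among} $z, f_x, f_{xx}$ with coefficients rational in $x$ — but Lemma~\ref{lemma_Zq_DIFEQ} is only second order and nonlinear (quadratic in the derivatives). To linearize and raise the order, I would differentiate the resulting relation once more with respect to $\theta_q$ (equivalently with respect to $x$), which introduces $f_{xxx}$ linearly and, crucially, allows one to eliminate the quadratic terms $z_q^2$, $z_q z_{qq}$ using the Lemma itself. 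The outcome is a single third-order \emph{linear} ODE in $z$ over $\mathbb{C}(x)$, which one then clears of denominators to get the stated polynomial form.

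The cleanest bookkeeping route, and the one I expect the authors intend given the remark about the ``concise formulation,'' is to work with the normalized unknown and use the standard fact that if $z$ has weight two and $x$ is a Hauptmodul, then $z$ is (up to the usual factor) of the form $y(x)\cdot(\tfrac{dx}{d\tau})^{\text{something}}$ where $y$ satisfies a hypergeometric-type equation; the nonlinear Lemma~\ref{lemma_Zq_DIFEQ} is precisely the statement that the Schwarzian of $x$ with respect to an appropriate local parameter equals a rational function of $x$, and the third-order equation is the symmetric square / associated equation. In practice, however, the fastest rigorous verification is: (i) grant from Theorem~\ref{ex_deq} that the ODE has the shape $A_3(x)f_{xxx}+A_2(x)f_{xx}+A_1(x)f_x+A_0(x)f=0$ with $A_i\in\mathbb{C}[x]$ of controlled degree (degrees $7,7,7,7$ after the common factor $x$ is extracted, with the elliptic points forcing $(x-1)^3 \mid A_1, A_2, A_3$); (ii) substitute the $q$-expansions of $z$ and $x$ (computable to high order from the eta-product and $\mathcal{E}_i$ formulas) and solve the resulting linear system for the finitely many unknown coefficients of the $A_i$; (iii) confirm that the solution is forced and unique, then argue by the valence/Sturm bound that an identity verified to sufficiently high $q$-order is exact. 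The degree bounds and the factor $(x-1)^3$ in steps (i)--(ii) are exactly what makes the system overdetermined enough to pin down the coefficients $254x^6-714x^5+\cdots$ etc.

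The main obstacle is the bookkeeping in the change of variables: tracking how the quadratic nonlinearity of Lemma~\ref{lemma_Zq_DIFEQ} becomes linear upon one further differentiation, and verifying that all odd powers of $w$ cancel so that the final coefficients are genuinely \emph{polynomial} (not merely rational involving $\sqrt{Q(x)}$) in $x$. Getting the precise integer coefficients right — and in particular checking the delicate vanishing $4(x-1)^2 f - P(x) = O(q^7)$ analogue that forces constancy — requires careful control of the local expansions of $x$ near $\tau = \rho_\pm$ (where $x = 1 + c_\pm(\tau-\rho_\pm)^2+\cdots$) and near the poles $p_1, p_2$ of $x$. I would handle this by first establishing the general shape and degree bounds abstractly (so that only finitely many coefficients are unknown), and then reducing the identity to a finite $q$-expansion check justified by the valence formula, exactly as in the proofs of Lemma~\ref{lemma_Zq_DIFEQ} and Proposition~\ref{h}.
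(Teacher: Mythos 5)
Your primary mechanism has a gap. Once you convert Lemma~\ref{lemma_Zq_DIFEQ} to $x$-derivatives via $\theta_q = \tfrac{1}{2}wzx\,\tfrac{d}{dx}$, the relation becomes \emph{homogeneous of degree two} in $(z, z_x, z_{xx})$, of the shape $A(x)\,z z_{xx} + B(x)\,z_x^2 + C(x)\,z z_x + D(x)\,z^2 = 0$. Differentiating once more in $x$ produces another degree-two homogeneous relation containing $z_x z_{xx}$, and when you eliminate $z_{xx}$ using the original relation this term turns into $z_x^3/z$ (plus lower-order terms): the equation stays nonlinear, so ``differentiate once and substitute the Lemma back in'' does not by itself yield a linear third-order ODE. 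The linear equation \emph{is} a consequence of the Lemma, but extracting it requires the structure you only gesture at in passing: dividing by $z^2$ gives a Riccati equation for $u = z_x/z$ in which $A+B = A/2$, so $v = z^{1/2}$ satisfies a second-order \emph{linear} ODE in $x$ and the stated equation is its symmetric square. Either you carry out that computation, or you do what the paper actually does: write the ODE as the vanishing of the $4\times 4$ Wronskian-type determinant in $f,\ z,\ z\log q,\ z\log^2 q$ with respect to $x$ (linear in $f$ by construction, with nonvanishing leading minor because $z,\tau z,\tau^2 z$ are independent solutions by Theorem~\ref{ex_deq}), and evaluate the minors using exactly the substitutions you list --- $x_q=\tfrac12 xwz$, the formula for $w_q$, and Lemma~\ref{lemma_Zq_DIFEQ} for $z_{qq}$ --- before clearing $w$ via $w^2=-127x^4-48x^3-66x^2-16x+1$.

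Your fallback (undetermined coefficients pinned down by $q$-expansions, justified by a priori existence from Theorem~\ref{ex_deq}) is a legitimate alternative in the spirit of the paper's proofs of Lemma~\ref{lemma_Zq_DIFEQ} and Proposition~\ref{h}, and it could be made rigorous; but as written it contains an error: $(x-1)^3$ does not divide the coefficient of $f_x$ in the stated equation (that coefficient is $x(x-1)(1397x^5-\cdots)$), so your claimed divisibility constraints are wrong, and you would still need to establish honest degree bounds on all four coefficient polynomials (from the local behavior at $x=1$, at the cusps, and at the poles $p_1,p_2$ of $x$) before the finite linear system you propose is guaranteed to determine the equation uniquely.
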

\begin{proof}
The differential equation satisfied by $f = z$ is given as
\begin{equation} 
\label{thm_ZxDIFEQ_equ1}
 \operatorname{det} \left(
\begin{array}{cccc}
 f & f_x & f_{xx} & f_{xxx} \\
 (z) & (z)_x & (z)_{xx} & (z)_{xxx} \\
 (z\log q) & (z\log q)_x & (z\log q)_{xx} & (z\log q)_{xxx} \\
 (z\log^2 q) & (z\log^2 q)_x & (z\log^2 q)_{xx} & (z\log^2 q)_{xxx}
\end{array}
\right) =0\text{.}
\end{equation}
When expanding this determinant, we make the following substitutions:
\begin{enumerate}
 \item For the diffential with respect to $x$, use the definition \eqref{eq:21} in the form
\begin{equation*}
 \theta_x = x \frac{\partial}{\partial x} = \frac{2}{w z} \theta_q\text{.}
\end{equation*}
\item When the first derivative $x_q$ appears, use the definition \eqref{eq:21} in the form
\begin{equation*}
 x_q = \frac{1}{2}x w z\text{.}
\end{equation*}
\item When the first derivative $w_q$ appears, use the relation between $w$ and $x$ to obtain
\begin{equation*}
 w_q = -x (127 x^3+36 x^2+33 x+4) z\text{.}
\end{equation*}
\item When the second derivative $z_{qq}$ appears, use Lemma \ref{lemma_Zq_DIFEQ} in the form
\begin{equation*}
 z_{qq}=\frac{3 z_q^2}{2 z}+\frac{3 x(127 x^5-222 x^4+126 x^3+4 x^2+27 x+2)}{8(x-1)^2} z^3
\end{equation*}
\end{enumerate}
When these substitutions are made in \eqref{thm_ZxDIFEQ_equ1}, the claimed differential equation results after clearing denominators by multiplying by $(1 - x)^3 w^5/16$ and using Proposition \ref{prop_level17_gens} (5).
\end{proof}

The linear differential equation in Theorem \ref{thm_ZxDIFEQ}
induces a series expansion for $z$ in terms of $x$ with coefficients $A_{n}$. 

\begin{corollary}
 \label{cor_Zx_coeff}
 \begin{equation}
\begin{alignedat}{5}
 z &= \sum_{n=0}^{\infty} A_n x^n& \quad \quad |x| &< 0.05122\dots,
\end{alignedat}
\end{equation}
where $A_0=2$, $A_{-1,\dots,-6}=0$ and 
\begin{gather*}
0=(n+1)^3 A_{n+1}
+(-19 n^3-24 n^2-14n-3) A_n\\
-3 \left(5 n^3+27 n^2-8 n+4\right) A_{n-1}
+\left(101n^3-300 n^2+213 n-52\right) A_{n-2}\\
-3 \left(55 n^3-267 n^2+491 n-305\right) A_{n-3}
+ 3 (n-3) \left(101 n^2-297 n+253\right) A_{n-4}\\
-9 (n-4) (n-3) (37 n-66) A_{n-5}
+127 (n-5) (n-4) (n-3) A_{n-6}.
\end{gather*}
The radius of convergence is the positive root of $127 x^4+48 x^3+66 x^2+16 x-1$.
\end{corollary}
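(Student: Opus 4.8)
The plan is to read off the recursion by substituting a formal power series into the differential equation of Theorem~\ref{thm_ZxDIFEQ}, to pin down the initial data from $q$-expansions, and then to locate the radius of convergence precisely. First I would observe that $\Omega = q + O(q^2)$ and $z = 2 + O(q)$, so $x = \Omega/z = \tfrac12 q + O(q^2)$; hence $q$ is a holomorphic function of $x$ near $x=0$, and $z$, being holomorphic in $q$, is holomorphic in $x$ near $0$ with $z|_{x=0} = 2$, so $z = \sum_{n\ge 0} A_n x^n$ with $A_0 = 2$. Substituting this series into the equation of Theorem~\ref{thm_ZxDIFEQ} and using $\theta_x^j\bigl(\sum A_n x^n\bigr) = \sum n^j A_n x^n$ (recall $f_x$ denotes $\theta_x f$, etc.), I collect the coefficient of $x^{n+1}$. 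Since $(x-1)^3(127x^4+48x^3+66x^2+16x-1)$ has constant term $1$, this coefficient equals $(n+1)^3 A_{n+1}$ plus the displayed combination of $A_n,\dots,A_{n-6}$, and setting it to zero is exactly the asserted recursion; the conventions $A_{-1}=\cdots=A_{-6}=0$ merely make it valid for $n=0,\dots,5$ (e.g. $n=0$ gives $A_1 = 3A_0 = 6$). As $(n+1)^3\ne 0$, the recursion together with $A_0=2$ determines all $A_n$, which are therefore the Taylor coefficients of $z$.

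For the radius $R$, note that in $\partial_x$-form the equation has leading coefficient $x^3(x-1)^3(127x^4+48x^3+66x^2+16x-1)$, equal to $1$ at $x=0$; thus $x=0$ is a regular singular point with indicial equation $\rho^3 = 0$ — in agreement with the three solutions being $z$, $z\log q$, $z\log^2 q$, only one of which is holomorphic — and by the classical theory of linear equations with analytic coefficients the holomorphic solution $\sum A_n x^n$ converges throughout the largest disk about $0$ containing none of the remaining singular points, namely $x=1$ and the four roots of $P(x):=127x^4+48x^3+66x^2+16x-1$. A direct examination of $P$ (one sign change, so a single positive real root $x_0$ with $P(0)<0<P(1)$, while the cubic cofactor has real root near $-0.3$ and a complex pair of modulus near $0.7$) shows that $x_0\approx 0.05122$ is the root of $P$ of least modulus and that $x_0 < 1$; hence $R \ge x_0$.

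The reverse inequality $R\le x_0$ — that the power series genuinely fails to extend past $|x|=x_0$ — is the heart of the matter, and I would deduce it from the recursion via the Poincar\'e--Perron theorem. Dividing out the $n^3$ parts of the coefficients, the characteristic polynomial of the recursion is
\[ \rho^7-19\rho^6-15\rho^5+101\rho^4-165\rho^3+303\rho^2-333\rho+127 = (\rho-1)^3\bigl(\rho^4-16\rho^3-66\rho^2-48\rho-127\bigr), \]
and the quartic factor is, up to sign, the reciprocal polynomial of $P$, with roots $1/x_i$ as $x_i$ ranges over the roots of $P$. The root $1/x_0\approx 19.5$ is simple and of strictly largest modulus (the others being $1$ and the $1/x_i$ with $|x_i|>x_0$), so Poincar\'e's theorem gives $A_{n+1}/A_n \to 1/x_0$ — hence $\limsup|A_n|^{1/n} = 1/x_0$ and $R = x_0$ — provided $A_n\ne 0$ for all large $n$. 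This nonvanishing I would obtain from positivity of $(A_n)$, proved by induction from the recursion: once $n$ is large the term $(19n^3+\cdots)A_n$ dominates the right-hand side (using a companion bound such as $A_n\ge 3A_{n-1}$, bootstrapped together with the estimate $|A_n| = O((1/x_0+\varepsilon)^n)$ from the previous paragraph), and finitely many base cases are checked directly.

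The step I expect to be the main obstacle is precisely this last one: verifying that the solution truly carries the dominant characteristic mode (equivalently, that $(A_n)$ does not eventually degenerate). A self-contained alternative bypassing Poincar\'e--Perron is a local analysis at the unique $\tau_0\in\mathbb{H}/\Gamma_0(17)$ with $x(\tau_0)=x_0$: there $w(\tau_0)^2 = -P(x_0)=0$ by Proposition~\ref{prop_level17_gens}(5), so $\tau_0$ is a ramification point of the degree-two map $x$ and $\tau-\tau_0$ is locally a square root of $x-x_0$; since $w_q(\tau_0) = -x_0(127x_0^3+36x_0^2+33x_0+4)z(\tau_0)\ne 0$ the ramification is simple, and checking $z_q(\tau_0)\ne 0$ yields $z = z(\tau_0) + \mathrm{const}\cdot\sqrt{x-x_0}+\cdots$ near $x_0$, so the series cannot converge on any larger disk. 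Either route gives $R = x_0$, the positive root of $127x^4+48x^3+66x^2+16x-1$.
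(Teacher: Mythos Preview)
Your proposal is correct and goes well beyond the paper's own treatment. In the paper this corollary is stated without proof: the recursion is understood to follow by substituting $z=\sum A_n x^n$ into the differential equation of Theorem~\ref{thm_ZxDIFEQ}, and the radius of convergence is simply asserted to be the positive root of $127x^4+48x^3+66x^2+16x-1$. Your first paragraph carries out that substitution explicitly and correctly fixes $A_0=2$ from the $q$-expansion, which is precisely the intended argument. One small wording slip: you say the leading coefficient ``in $\partial_x$-form \dots\ equals $1$ at $x=0$,'' but $x^3(x-1)^3P(x)$ vanishes there; what you mean is that the $\theta_x$-form leading coefficient $(x-1)^3P(x)$ equals $1$ at $x=0$, whence the indicial equation $\rho^3=0$.

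Everything from your second paragraph on is content the paper does not supply. The lower bound $R\ge x_0$ via the standard theory of regular singular points is correct, and your factorization of the Poincar\'e characteristic polynomial as $(\rho-1)^3(\rho^4-16\rho^3-66\rho^2-48\rho-127)$, with the quartic reciprocal to $P$, is a nice observation. For the matching upper bound you rightly flag the essential difficulty. In the Poincar\'e--Perron route, note that the classical hypothesis of pairwise distinct moduli fails because of the triple root at $\rho=1$; Perron's refinements still apply once you know $(A_n)$ is not $o((1/x_0)^n)$, and your positivity/growth sketch has the right shape but is not yet complete. The local-analysis route is cleaner and your reduction is correct: since $w$ has a simple zero at $\tau_0$ (via $w_q=-x(127x^3+36x^2+33x+4)z$ and $z(\tau_0)\ne 0$), the map $x$ is simply ramified there, so $z(x)$ extends analytically past $x_0$ if and only if $z_q(\tau_0)=0$. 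The paper does not address this, and verifying $z_q(\tau_0)\ne 0$ does require an additional argument (e.g.\ a rigorous numerical estimate, both $\tau_0$ and $z_q$ being effectively computable); the indicial exponents at $x_0$ work out to $\{0,\tfrac12,1\}$, confirming a genuine half-integer solution exists, but this alone does not force $z$ to pick it up. Modulo this single verification your proof is complete, and strictly more thorough than what appears in the paper.
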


To make use of the series appearing in Corollary \ref{cor_Zx_coeff},
we require
explicit evaluations for the $x(\tau)$  within the domain of
validity. In the next section, we prove that the number of singular
values is finite and compile a complete list of quadratic evaluations and
expansions. 

\section{Singular Values and Series for $1/\pi$} \label{singular}

In this section, singular values for $x(\tau)$ are derived and used to formulate
Ramanujan-Sato expansions. The work culminates in a proof that there are  precisely $11$
singular values for $x(\tau)$ of degree at most two over $\Bbb Q$ within the radius of convergence of
Corollary \ref{cor_Zx_coeff}. The series given by \eqref{eq:1} is
the only such expansion with a rational singular value
for $x(\tau)$.
The main challenge in proving the expansions lies in rigorously determining exact evaluations for $x(\tau)$ for given $\tau$ and deriving constants appearing in the Ramanujan-Sato series. To do this, we formulate modular equations for $x(\tau)$ and provide an explicit
relation between the modular equations and constants appearing in the
series.

 We demonstrate in the proof of Theorem \ref{last} that the following table is a complete list of
 singular 
values for $x(\tau)$ in a fundamental domain for $\Gamma_{0}(17)$ with
  $[\bbQ(x(\tau)):\bbQ] \le 2$ within the radius of convergence of Corollary \ref{cor_Zx_coeff}. Each value $\tau$
  is listed by the coefficients $(a,b,c)$ of its minimal
  polynomial, and the values are ordered by discriminant. 

\begin{table}
\begin{equation}
\begin{array}{lll}
b^2-4ac & \tau(a,b,c) & x(\tau)  \\ 
 -1411 & (17,17,25) & (-1025-252 \sqrt{17})^{-1} \\
 -1003 & (17,17,19) & (-345-84 \sqrt{17})^{-1} \\
 -595 & (17,-17,13) & (-90-21 \sqrt{17})^{-1} \\
 -427 & (17,-27,17) & (30+33 i \sqrt{7})^{-1} \\
 -427 & (17,-41,31) & (30-33 i \sqrt{7})^{-1} \\
 -408 & (17,-34,23) & (55+24 \sqrt{2})^{-1} \\
 -408 & (34,-68,37) & (55-24 \sqrt{2})^{-1} \\
 -340 & (17,-34,22) & (29+4 \sqrt{85})^{-1} \\
 -323 & (17,-17,9) & (-22-7 \sqrt{17})^{-1} \\
 -187 & (17,-17,7) & -1/21 \\
 -136 & (17,-34,19) & (12+3\sqrt{17})^{-1} 
\end{array}
\end{equation}

\caption{Complete list of singular values of $x(\tau)$ of degree at most $2$ within the
  radius of convergence of Corollary 3.6, ordered by discriminant.}
\label{tab_x_values_17}
\end{table}

By Proposition \ref{h}, finding singular values for $x(\tau)$ is
equivalent to finding singular values for the normalized Thompson
series $17A$. The fundamental results needed for such evaluations are
presented in \cite{MR1400423}. Our work below is
a detailed rendition of the general
presentation in \cite{MR1400423} tailored to the modular function
$x(\tau)$. We begin with the set of matrices
\begin{equation*}
\Delta_n^*(17)=\{\smat \alpha \beta \gamma \delta \in \bbZ^{2\times2}| \gcd(\alpha,\beta,\gamma,\delta)=1 \text{ and } \alpha \delta-\beta \gamma=n \text{ and } \gamma \equiv 0 \bmod 17\}\text{.}
\end{equation*}

\begin{lemma}
\label{lem_Delta}
If $\gcd(n,17)=1$, then $\Delta_n^*(17)$ has the coset decomposition
\begin{equation*}
 \Delta_n^*(17) = \bigcup_{\underset{\underset{\gcd(\alpha,\beta,\delta)=1}{0\le\beta<\delta}}{\alpha \delta=n}} \Gamma_0(17) \mat \alpha \beta 0 \delta \text{,}
\end{equation*}
and the double coset representation
\begin{equation*}
\Delta_n^*(17) = \Gamma_0(17) \mat 1 0 0 n \Gamma_0(17)\text{.}
\end{equation*}
\end{lemma}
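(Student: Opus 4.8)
The plan is to establish the coset decomposition first, then its disjointness, and finally deduce the double coset statement. The single observation underlying everything is that $\gcd(n,17)=1$ forces $17\nmid\alpha$ and $17\nmid\delta$ for every $\smat\alpha\beta\gamma\delta\in\Delta_n^*(17)$: since $17\mid\gamma$, one has $\alpha\delta\equiv\alpha\delta-\beta\gamma=n\not\equiv0\bmod17$, so neither $\alpha$ nor $\delta$ is divisible by $17$. This is precisely what allows the required matrix reductions to be carried out using elements of $\Gamma_0(17)$ rather than merely of $\op{SL}_2(\bbZ)$.

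The easy inclusion in the coset decomposition is immediate: any $\smat\alpha\beta0\delta$ with $\alpha\delta=n$ and $\gcd(\alpha,\beta,\delta)=1$ has content $1$, determinant $n$, and lower-left entry $0\equiv0\bmod17$, and $\Delta_n^*(17)$ is stable under left multiplication by $\Gamma_0(17)$ because such multiplication fixes the content, multiplies the determinant by $1$, and sends the lower-left entry to $c\alpha+d\gamma\equiv0\bmod17$. For the reverse inclusion, let $M=\smat\alpha\beta\gamma\delta\in\Delta_n^*(17)$, put $e=\gcd(\alpha,\gamma)>0$, and write $\alpha=e\alpha_1$, $\gamma=e\gamma_1$ with $\gcd(\alpha_1,\gamma_1)=1$. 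Since $e\mid\alpha$ we have $17\nmid e$, hence $17\mid\gamma=e\gamma_1$ gives $17\mid\gamma_1$; choosing $a',b'$ with $a'\alpha_1+b'\gamma_1=1$ makes $\smat{a'}{b'}{-\gamma_1}{\alpha_1}\in\Gamma_0(17)$, and left-multiplying $M$ by it yields an upper triangular matrix $\smat{e}{\beta'}{0}{\delta'}$ with $e\delta'=\det M=n$, so $\delta'=n/e>0$. A further left multiplication by $\smat1k01\in\Gamma_0(17)$ shifts $\beta'$ into $[0,\delta')$, and since the content is still $1$, the matrix $M$ lies in one of the listed cosets. For disjointness, suppose $\smat abcd\in\Gamma_0(17)$ satisfies $\smat abcd\smat{\alpha_1}{\beta_1}{0}{\delta_1}=\smat{\alpha_2}{\beta_2}{0}{\delta_2}$; then $c\alpha_1=0$ forces $c=0$, hence $a=d=\pm1$, and positivity of $\alpha_1,\alpha_2$ rules out $-1$, so $\alpha_1=\alpha_2$, $\delta_1=\delta_2$, $\beta_2=\beta_1+b\delta_1$, and $0\le\beta_i<\delta_i$ forces $b=0$.

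For the double coset, the inclusion $\Gamma_0(17)\smat100n\Gamma_0(17)\subseteq\Delta_n^*(17)$ follows from the stability of $\Delta_n^*(17)$ under left and right multiplication by $\Gamma_0(17)$ (the new lower-left entry under right multiplication being $\gamma a+\delta c\equiv0\bmod17$). For the reverse inclusion it suffices, by the coset decomposition, to place each $\smat\alpha\beta0\delta$ (with $\alpha\delta=n$, $0\le\beta<\delta$, $\gcd(\alpha,\beta,\delta)=1$) into $\Gamma_0(17)\smat100n\Gamma_0(17)$, and I would do this by an explicit construction. First choose $q\equiv-\beta\pmod{\delta}$ with $\gcd(q,\alpha)=1$: at a prime $\ell\mid\alpha$ with $\ell\nmid\delta$ one can make $q$ a unit modulo $\ell$ by adjusting the integer $t$ in $q=-\beta+\delta t$, while at a prime $\ell\mid\gcd(\alpha,\delta)$ one has $\ell\nmid\beta$ by $\gcd(\alpha,\beta,\delta)=1$, so $q\equiv-\beta$ is automatically a unit there; the Chinese Remainder Theorem then produces a suitable $t$. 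Next solve $p\alpha-qr=1$, and replace $(p,r)$ by $(p+qt',\,r+\alpha t')$ with $t'$ chosen so that $17\mid r$, which is possible because $17\nmid\alpha$. Then $h^{-1}:=\smat{p}{q}{r}{\alpha}\in\Gamma_0(17)$, and a short computation shows that $g:=\smat\alpha\beta0\delta\,h^{-1}\,\smat100n^{-1}$ has integer entries (because $\delta\mid q+\beta$), determinant $p\alpha-qr=1$, and lower-left entry $\delta r$ divisible by $17$; hence $g\in\Gamma_0(17)$, and $g\smat100n h=\smat\alpha\beta0\delta$. Alternatively, one may argue by counting: both $\Delta_n^*(17)$ and the double coset are disjoint unions of left $\Gamma_0(17)$-cosets, with $\psi(n)=n\prod_{p\mid n}(1+p^{-1})$ cosets on each side — for the double coset because $\Gamma_0(17)\cap\smat100n^{-1}\Gamma_0(17)\smat100n=\{\smat abcd\in\Gamma_0(17):n\mid b\}$ has index $[\op{SL}_2(\bbZ):\Gamma_0(n)]$ in $\Gamma_0(17)$, using that reduction $\Gamma_0(17)\to\op{SL}_2(\bbZ/n\bbZ)$ is surjective when $\gcd(n,17)=1$ — and since the double coset lies in $\Delta_n^*(17)$, equal counts force equality.

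I expect the reverse inclusion for the double coset to be the point that needs the most care: the explicit construction is short but requires two congruence conditions — coprimality of $q$ with $\alpha$, and $17\mid r$ — to be satisfied simultaneously, and the counting alternative requires the surjectivity of reduction modulo $n$ onto $\op{SL}_2(\bbZ/n\bbZ)$. Both routes rest ultimately on $\gcd(n,17)=1$; once that hypothesis is used in the places indicated, no deeper input is needed.
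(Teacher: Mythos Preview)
Your proof is correct. For the left-coset decomposition your argument coincides with the paper's: both left-multiply by an element of $\Gamma_0(17)$ with bottom row $(\pm\gamma_1,\mp\alpha_1)$ (where $\gamma_1=\gamma/\gcd(\alpha,\gamma)$, $\alpha_1=\alpha/\gcd(\alpha,\gamma)$) to reach upper-triangular form, and you additionally spell out the disjointness check that the paper only asserts.

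For the double coset the routes diverge. The paper first invokes Smith normal form over $\bbZ$ to write an arbitrary $m\in\Delta_n^*(17)$ as $\gamma_1\smat100n\gamma_2$ with $\gamma_i\in\op{SL}_2(\bbZ)$, then uses the identity $\smat100n=\smat{a}{b}{nc}{d}\smat100n\smat{a}{-bn}{-c}{d}$ to slide a suitable element of $\Gamma_0(n)$ past $\smat100n$ and force $\gamma_1'=\gamma_1\smat{a}{b}{nc}{d}\in\Gamma_0(17)$; the hypothesis $\gcd(n,17)=1$ enters implicitly as $\Gamma_0(17)\Gamma_0(n)=\op{SL}_2(\bbZ)$. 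You instead work representative by representative, constructing explicit $g,h\in\Gamma_0(17)$ with $g\smat100n h=\smat\alpha\beta0\delta$; this is more hands-on and makes the two uses of the hypothesis (the CRT construction of $q$ coprime to $\alpha$, and the solvability of $17\mid r+\alpha t'$) fully transparent. Your counting alternative via $[\Gamma_0(17):\Gamma_0(17)\cap\smat100n^{-1}\Gamma_0(17)\smat100n]=\psi(n)$ is a third valid route. All three are standard; the paper's argument handles every $m$ at once, while yours keeps each step checkable by hand.
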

\begin{proof}
Any $\smat \alpha \beta \gamma \delta \in \Delta_n^*$ can be converted to an upper triangular matrix by multiplying on the left by a matrix of the form
\begin{equation*}
\mat {*}{*}{\frac{\gamma}{\gcd(\alpha,\gamma)}}{\frac{-\alpha}{\gcd(\alpha,\gamma)}} \in \Gamma_0(17)\text{.}
\end{equation*}
It is then easy to see that the claimed representatives are distinct modulo $\Gamma_0(17)$. This proves the decomposition formula. Next, by performing elementary row and column operations on the matrix $m \in \Delta_n^*(17)$, we find matrices $\gamma_1, \gamma_2 \in \Gamma(1)$ such that $m = \gamma_1 \smat 100n \gamma_2$. Since
\begin{equation*}
\mat 1 0 0 n  = \mat a b {nc} {d} \mat 1 0 0 n \mat a {-bn} {-c} {d}\text{,}
\end{equation*}
we may find appropriate $a$, $b$, $c$, and $d$ such that $\gamma_1'=\gamma_1 {\smat a b {nc} {d}} \in \Gamma_0(17)$. This results in an equality of the form $m = \gamma_1' \smat 100n \gamma_2'$ where $\gamma_1' \in \Gamma_0(17)$, which forces $\gamma_2'\in \Gamma_0(17)$ as well. This establishes the double coset representation.
\end{proof}

We now establish modular equations central to our explicit evaluations
for $x(\tau)$.

\begin{proposition}
\label{prop_modequ}
 For any integer $n \ge2 $ with $\gcd(n,17)=1$, there is a polynomial $\Psi_n(X,Y)$ of degree $\psi(n) = n \prod_{\substack{q|n \\ q \text{ prime}}}(1+\tfrac 1 q)$ in $X$ and $Y$ such that:
\begin{enumerate}
 \item $\Psi_n(X,Y)$ is irreducible and has degree $\psi(n)$ in $X$ and $Y$.
 \item $\Psi_{n}(X,Y)$ is symmetric in $X$ and $Y$.
 \item The roots of $\Psi_n(x(\tau),Y) = 0$ are precisely the numbers $Y=x((\alpha \tau+\beta)/\delta)$ for integers $\alpha$, $\beta$ and $\delta$ such that $\alpha \delta=n$, $0\le \beta < \delta$, and $\gcd(\alpha,\beta,\delta)=1$.
\end{enumerate}
\end{proposition}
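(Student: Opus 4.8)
The plan is to realize $\Psi_n$ as the minimal polynomial of $x(n\tau)$ (or more precisely of the full Hecke orbit) over the function field $\mathbb{C}(x(\tau))$, exploiting the coset structure from Lemma \ref{lem_Delta}. First I would fix $\tau$ and consider the finite set of functions $\{x_i(\tau) := x((\alpha_i \tau+\beta_i)/\delta_i)\}$ indexed by the coset representatives $\smat{\alpha}{\beta}{0}{\delta}$ of $\Gamma_0(17)\backslash\Delta_n^*(17)$; by Lemma \ref{lem_Delta} there are exactly $\psi(n)$ of these. I would then form the symmetric polynomial
\begin{equation*}
 \Phi_n(X,\tau) = \prod_{i=1}^{\psi(n)} \bigl(X - x_i(\tau)\bigr),
\end{equation*}
whose coefficients are symmetric functions of the $x_i$. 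The key observation is that $\Gamma_0(17)$ acts on the set of cosets $\Gamma_0(17)\smat{\alpha}{\beta}{0}{\delta}$ by right multiplication, hence it permutes the functions $x_i(\tau)$; since $x$ is $\Gamma_0(17)$-invariant (Proposition \ref{prop_level17_gens}(1)), each elementary symmetric function $e_j(x_1,\dots,x_{\psi(n)})$ is a $\Gamma_0(17)$-invariant holomorphic-away-from-cusps function. By Proposition \ref{prop_level17_gens}(3)--(4) and a pole analysis at the cusps (each $x_i$ has at worst a simple zero and poles controlled by the cusp behavior of $x$), these symmetric functions lie in $\mathbb{C}[x(\tau)]$, giving $\Phi_n(X,\tau) = \Psi_n(X, x(\tau))$ for a two-variable polynomial $\Psi_n$. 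This establishes part (3) immediately and shows $\deg_X \Psi_n = \psi(n)$.

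Next I would address symmetry (part (2)). Here the double coset representation $\Delta_n^*(17) = \Gamma_0(17)\smat{1}{0}{0}{n}\Gamma_0(17)$ from Lemma \ref{lem_Delta} is the crucial tool: it shows $\Delta_n^*(17)$ is stable under transpose-up-to-$\Gamma_0(17)$, equivalently that if $x_i(\tau) = x(M_i\tau)$ runs over the orbit then so does $x(M_i^{-1}n\tau)$ appropriately normalized. The standard argument (as in \cite{MR1400423}) is that $\Psi_n(x(\tau), x(n\tau)) = 0$ — since $x(n\tau)$ corresponds to the coset of $\smat{n}{0}{0}{1}$, which lies in $\Delta_n^*(17)$ — and simultaneously, substituting $\tau \mapsto \tau/n$ and using the involution on cosets, $\Psi_n(x(n\tau), x(\tau)) = 0$; irreducibility then forces $\Psi_n(X,Y)$ and $\Psi_n(Y,X)$ to agree up to a constant, and examining leading behavior pins the constant to $1$. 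The degree equality $\deg_X = \deg_Y = \psi(n)$ follows from symmetry together with the count already in hand.

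For irreducibility (part (1)), I would argue that $\Gamma_0(17)$ acts transitively on the set of cosets $\{\Gamma_0(17)\smat{\alpha}{\beta}{0}{\delta}\}$ — equivalently that $\Delta_n^*(17)$ is a single double coset, which is exactly the content of the second half of Lemma \ref{lem_Delta}. Transitivity of the monodromy/Galois action on the roots $x_i(\tau)$ of $\Psi_n(x(\tau),X)$ over $\mathbb{C}(x(\tau))$ shows this polynomial is irreducible over $\mathbb{C}(x(\tau))$, and hence $\Psi_n(X,Y)$ is irreducible over $\mathbb{C}$ as a polynomial in two variables (it cannot factor, as any factor would give a proper subfield/subcover).

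I expect the main obstacle to be the pole analysis at the cusps that justifies $e_j(x_1,\dots,x_{\psi(n)}) \in \mathbb{C}[x(\tau)]$ rather than merely $\mathbb{C}(x(\tau))$: one must verify that no $x_i$ has a pole anywhere on $\mathbb{H}$ (clear, since $x$ does) and that at the two cusps the orders of vanishing/poles of the $x_i$ are compatible with a polynomial in $x$, which has simple zeros at both cusps by Proposition \ref{prop_level17_gens}(2). Equivalently, one checks $\Psi_n$ has no denominator by confirming the symmetric functions are holomorphic at $\infty$ as functions of $1/x$; this is a finite $q$-expansion bookkeeping task using the explicit cusp widths of $\Gamma_0(17)$ and the action of the representatives $\smat{\alpha}{\beta}{0}{\delta}$ on the two cusps $0$ and $\infty$. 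The symmetry argument is the second delicate point, since it requires care in normalizing the two families of coset representatives consistently; but given Lemma \ref{lem_Delta} this reduces to the classical template.
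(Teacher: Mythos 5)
Your overall architecture --- symmetric functions of the Hecke orbit $\{x_i\}$, transitivity of the $\Gamma_0(17)$-action deduced from the double coset decomposition for irreducibility, and the involution $\tau \mapsto -1/(17n\tau)$ (equivalently your $\tau\mapsto\tau/n$ step) for symmetry --- is exactly the paper's. But there is a genuine gap in the step you yourself flag as the main obstacle: the claim that the elementary symmetric functions $e_j(x_1,\dots,x_{\psi(n)})$ lie in $\mathbb{C}[x(\tau)]$, justified by a pole analysis ``at the cusps.'' This rests on a wrong picture of the divisor of $x$. By Proposition \ref{prop_level17_gens}(2), $x$ has its two simple \emph{zeros} at the cusps; its poles are at the interior points $p_1, p_2 = W_{17}(p_1)$ of $\mathbb{H}/\Gamma_0(17)$ (the non-elliptic zeros of $z$, as recorded in the proof of Lemma \ref{lemma_Zq_DIFEQ}). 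Consequently each $x_i(\tau)=x((\alpha_i\tau+\beta_i)/\delta_i)$ has poles at the $\Delta_n^*(17)$-preimages of $p_1,p_2$, which are generically interior points \emph{not} equivalent to $p_1,p_2$; a polynomial in $x(\tau)$ can only have poles at $p_1,p_2$, so your $e_j$ are merely elements of $\mathbb{C}(x)$ with nontrivial denominators, not of $\mathbb{C}[x]$. Your parenthetical ``no $x_i$ has a pole anywhere on $\mathbb{H}$'' is simply false, and the subsequent bookkeeping in $1/x$ at $\infty$ does not repair it.

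The paper avoids this by inverting: it builds the product out of $1/x_i$ rather than $x_i$ (see \eqref{prop_modequ_equ1}), using the fact that $1/x(\tau)$ is holomorphic on $\mathbb{H}$ with poles only at the two cusps --- i.e. $1/x$ plays the role of $j$ in the classical argument. Then the symmetric functions of the $1/x_i$ are $\Gamma_0(17)$- and $W_{17}$-invariant, holomorphic away from the cusps, hence genuinely polynomials in $1/x$, and $\Psi_n$ is recovered by multiplying through by $(XY)^{\psi(n)}$; the nonvanishing of the resulting top coefficient (because $x$ has no poles at the cusps) is what controls the degree. You could instead salvage your version by accepting coefficients in $\mathbb{C}(x)$ and clearing denominators, but then the degree count in the first variable and the normalization needed for the symmetry argument require extra work that your sketch does not supply. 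Everything else --- the transitivity/irreducibility argument and the symmetry via the Fricke-type involution --- is correct and matches the paper.
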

\begin{proof}
The polynomial $\Psi_n$ satisfies
\begin{equation}
\label{prop_modequ_equ1}
(XY)^{-\psi(n)} \Psi_{n}(X,Y)=  \prod_{ \underset{\underset{(\alpha,\beta,\delta)=1}{0\le\beta<\delta}}{\alpha \delta=n}} \left( Y^{-1} - x\left( \frac{\alpha \tau+\beta}{\delta} \right)^{-1} \right)\text{,}
\end{equation}
where the coefficients of $Y^{-k}$ on the right hand side should
expressed as polynomials in $1/X$ for $X=x(\tau)$ as demonstrated in
the proof of Corollary \ref{mod}. This relies on the fact that $\Gamma_0(17)$ and $W_{17}$ permute the set of functions $x\left( (\alpha \tau+\beta)/\delta \right)$ where $\alpha \delta=n$, $0\le\beta<\delta$, and $\gcd(\alpha,\beta,\delta)=1$. The double coset representation in Lemma \ref{lem_Delta} shows that every orbit contains $x(\tau/n)$, and hence the action of $\Gamma_0(17)$ on the roots must be transitive. Since
\begin{equation*}
{\mat 0 {-1} {17} 0}^{-1} \mat \alpha \beta 0 \delta \mat 0 {-1} {17} 0 = \mat {\delta}{0}{-17 \beta}{\alpha} \in \Delta_n^*(17)\text{,}
\end{equation*}
it is clear that $W_{17}$ permutes these functions as well by the decomposition in Lemma \ref{lem_Delta}. The coefficient of $X^{\psi(n)}Y^{\psi(n)}$ in $\Psi_{n}(X,Y)$ is the constant term of the product on the right hand side of \eqref{prop_modequ_equ1}, which is clearly non-zero because the function $x(\tau)$ does not have poles at the cusps of $\mathbb{H}/\Gamma_0(17)$. Therefore, $\Psi_n(X,X)$ has the claimed degree $2\psi(n)$. The symmetry can be proven by noting that $\tau \to -1/(17n \tau)$ interchanges $x(\tau)$ and $x(n \tau)$.
\end{proof}

In Corollary \ref{mod}, modular equations
$\Psi_{n}(X, Y) =0$ are derived for $X = x(\tau)$ and $y = x((\alpha \tau + \beta)/\delta)$ satisfying the conditions of Proposition
\ref{prop_modequ}. The proof indicates how modular equations for
larger $n$ may be derived and involves techniques analogous to those used to deduce
classical modular equations of level $n$ satisfied by the $j$
invariant \cite{MR890960,MR0314766}. 
\begin{corollary} \label{mod}
 We have
\begin{align*}
 \Psi_{2}(X,Y)&=-9 X^3 Y^3-12 X^3 Y^2+X^3 Y+2 X^3-12 X^2 Y^3\\
              &+8 X^2 Y^2+10 X^2 Y+X Y^3+10 X Y^2-X Y+2 Y^3\text{,} \\
\Psi_{3}(X,Y) &=435X^4Y^4+231X^4Y^3+231X^3Y^4+45X^4Y^2-385X^3Y^3+45X^2Y^4\\
&-39X^4Y-63X^3Y^2-63X^2Y^3-39XY^4+4X^4+9X^3Y+123X^2Y^2+9XY^3\\
&+4Y^4+15X^2Y+15XY^2-XY. 
\end{align*}
\end{corollary}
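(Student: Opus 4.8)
The plan is to derive the two modular equations $\Psi_2$ and $\Psi_3$ by making the defining product in \eqref{prop_modequ_equ1} completely explicit and then re-expressing its coefficients (which are a priori only $\Gamma_0(17)+$-invariant holomorphic functions) as polynomials in $1/X = 1/x(\tau)$. For $n=2$ we have $\psi(2)=3$, and the three relevant cosets correspond to $(\alpha,\beta,\delta) \in \{(2,0,1),(1,0,2),(1,1,2)\}$, so the right side of \eqref{prop_modequ_equ1} is $\bigl(Y^{-1} - x(2\tau)^{-1}\bigr)\bigl(Y^{-1} - x(\tau/2)^{-1}\bigr)\bigl(Y^{-1} - x((\tau+1)/2)^{-1}\bigr)$. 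Expanding in powers of $Y^{-1}$, the coefficient of $Y^{-k}$ is (up to sign) the $k$-th elementary symmetric function $e_k$ in the three quantities $x(2\tau)^{-1}$, $x(\tau/2)^{-1}$, $x((\tau+1)/2)^{-1}$; by Proposition \ref{prop_modequ} each $e_k$ is invariant under $\Gamma_0(17)$ and $W_{17}$, hence lies in $\mathbb{C}(x)$ by Proposition \ref{prop_level17_gens}(3), and in fact is a polynomial in $1/x(\tau)$ of controlled degree because $x$ has no poles at the cusps. The same recipe applies for $n=3$ with $\psi(3)=4$ and the four cosets $(3,0,1),(1,0,3),(1,1,3),(1,2,3)$.

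Concretely, I would carry this out as follows. First, compute the $q$-expansions of $x(\tau)$, $x(2\tau)$, $x(\tau/2)$, $x((\tau+1)/2)$ (and the corresponding three functions at $n=3$) to sufficiently high order; these are available from the product formula \eqref{XX} together with the eta/theta representation in Proposition \ref{h}, or directly from $x = \Omega/z$. Second, form the elementary symmetric functions $e_k$ of the $n+1$ reciprocals; each $e_k$ has a $q$-expansion with a pole of bounded order at $q=0$. Third, since $e_k \in \mathbb{C}(1/x)$ and $1/x$ itself has a known $q$-expansion beginning $q^{-2}(1+\cdots)/2$ (from $x = 2q^2 + \cdots$), determine the polynomial $P_k$ with $e_k = P_k(1/x)$ by solving a finite linear system matching $q$-expansion coefficients, the degree bound coming from the order of the pole of $e_k$ relative to that of $1/x$. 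Fourth, assemble $(XY)^{-\psi(n)}\Psi_n(X,Y) = \sum_{k=0}^{\psi(n)} (-1)^k e_k Y^{-\psi(n)+k}$ with $X=x(\tau)$, clear denominators by multiplying through by $(XY)^{\psi(n)}$, and read off the stated polynomials; finally verify symmetry in $X,Y$ as a consistency check (guaranteed by the $\tau \mapsto -1/(17n\tau)$ argument in Proposition \ref{prop_modequ}).

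The main obstacle is bookkeeping rather than conceptual: one must carry enough terms in the $q$-expansions to pin down every coefficient of $\Psi_n$ unambiguously, and the functions $x(\tau/2)$ and $x((\tau+1)/2)$ involve half-integral powers of $q$ (indeed $x(\tau/2) = 2q + \cdots$), so care is needed to confirm that the symmetric combinations $e_k$ are genuine power series in integral powers of $q$ before matching against $\mathbb{C}(1/x)$. A clean way to organize the degree bounds is to note that $1/x$ has a double pole at each of the two cusps and no other poles, so a polynomial of degree $d$ in $1/x$ has a pole of order $2d$ at $\infty$; matching this against the order of the pole of $e_k$ at $q=0$ fixes $d$, and then $2d+1$ coefficients of the $q$-expansion determine $P_k$ completely. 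For $n=3$ the degrees are large enough (the product has degree $\psi(3)=4$ in $Y^{-1}$ and the resulting $\Psi_3$ has degree $4$ in each of $X,Y$) that the linear algebra is best done by machine, but it is entirely routine; once the coefficients are obtained, irreducibility and the degree claims of Proposition \ref{prop_modequ} are already established, so the Corollary follows.
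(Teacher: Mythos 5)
Your proposal follows essentially the same route as the paper: expand the product \eqref{prop_modequ_equ1} in powers of $Y^{-1}$, recognize the elementary symmetric functions of the reciprocals as $\Gamma_0(17)$- and $W_{17}$-invariant functions holomorphic away from the cusps, and pin them down as polynomials in $1/x$ by matching $q$-expansions, with the degree bounded by the order of the pole at $q=0$. One slip to correct: $x(\tau) = q/2 + O(q^2)$ (not $2q^2+\cdots$), so $1/x$ has a \emph{simple} pole at $\infty$ in the variable $q$ and a degree-$d$ polynomial in $1/x$ has a pole of order $d$, not $2d$ --- e.g. the coefficient $-1/(x_1x_2x_3) = 8q^{-3}+\cdots$ is matched by a cubic in $1/x$, exactly as in the paper; your bookkeeping self-corrects once the correct leading term is used, and the rest of the argument goes through unchanged.
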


\begin{proof}
The level $n = 2$ result is representative of $n=3$ and other cases. For $n=2$,
we have $$\{(\alpha,\beta,\delta) \mid
\gcd(\alpha,\beta,\delta) = 1, 0\le\beta<\delta, \alpha \delta=n\} =
\{(1,0,2), (2,0,1),(1,1,2)\}.$$ Let $x(\tau)$ be defined as in
Proposition \ref{prop_level17_gens}. Then $$x_{1} = x( \tau/2), \quad 
x_{2} = x(2 \tau), \quad x_{3} = x\left ( \frac{\tau +1}{2} \right).$$
By \eqref{prop_modequ_equ1}, 
\begin{align*}
\Psi_{2}(1/X, 1/Y)&= Y^{-3} - \left (\frac{x_{1} x_{2} + x_{1} x_{3} + x_{2} x_{3}}{x_{1} x_{2}
  x_{3}} \right ) Y^{-2} + \left ( \frac{ x_{1} + x_{2} +
  x_{3}}{x_{1} x_{2} x_{3}} \right )Y^{-1} - \frac{1}{x_{1} x_{2} x_{3}}.
\end{align*}
By Theorem \ref{prop_level17_gens}, we know that $1/x(\tau)$
is analytic on $X_{0}(17)$ except for simple poles at the
cusps. Therefore, the only
poles in $\Bbb H/\Gamma_{0}(17)$  of the coefficients of $Y^{-k}$ are at points
equivalent to the cusps
$0$ and $\infty$. We can explicitly compute the $q$-expansion for each
of the coefficients and deduce that each has a pole of order at most $3$ at $q
= 0$. Since each coefficient is invariant under $\Gamma_{0}(17)$ and
$W_{17}$, the coefficients may be expressed as
polynomials of degree at most $3$ in $1/x(\tau)$. Explicitly, 
\begin{align*}
 -\frac{x_{1}x_{2} + x_{1}x_{3} + x_{2} x_{3}}{x_{1} x_{2} x_{3}} &=
  -\frac{2}{q^2}-15 +O(q), \\
  \frac{x_{1} + x_{2} + x_{3}}{x_{1}x_{2}x_{3}} &=
                                                  \frac{20}{q^2}+\frac{108}{q}+419+O(q),
  \\ 
 -  \frac{1}{x_{1} x_{2} x_{3}} &=
  \frac{8}{q^3}+\frac{62}{q^2}+\frac{316}{q}+1307 + O(q).
 \end{align*}
Therefore, with $x = x(\tau)$, we may determine polynomials in $1/x$
such that 
\begin{align*}
c_{1}(\tau) &=  -  \frac{1}{x_{1} x_{2} x_{3}} - \left (
  -\frac{9}{2}-6x^{-1}+\frac{1}{2}x^{-2}+x^{-3}\right ) = O(q), \\ 
c_{2}(\tau) &= \frac{x_{1} + x_{2} + x_{3}}{x_{1}x_{2}x_{3}} - \left (
   -6 + 4 x^{-1} + 5 x^{-2}\right ) = O(q),\\ 
c_{3}(\tau) &= -\frac{x_{1}x_{2} + x_{1}x_{3} + x_{2} x_{3}}{x_{1}
              x_{2} x_{3}} - \left ( \frac{1}{2} + 5x^{-1} -
              \frac{1}{2} x^{-2}\right ) = O(q).
\end{align*}
Since the functions $c_{j}(\tau)$, $j=1,2,3$, are analytic on the upper half plane and at the cusps of
$X_{0}(17)$, each $c_{j}$ is constant, and $c_{j}(i\infty) = 0$. 
\end{proof}

In Theorem \ref{expl}, we prove each evaluation
from Table \ref{tab_x_values_17}. The evaluations are proven by
showing that $x(\tau)$ satisfies a modular equation of degree $n$ for
some $n$.

\begin{theorem} \label{expl}  
%
%
Each evaluation for
  $x(\tau(a,b,c))$ from Table
  \ref{tab_x_values_17} holds, and $x(\tau(17,-8,1)) =1$. 

\end{theorem}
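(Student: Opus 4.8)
The plan is to verify each entry of Table \ref{tab_x_values_17} (and the auxiliary value $x(\tau(17,-8,1))=1$) by the standard CM/modular-equation method, using the polynomials $\Psi_n$ furnished by Proposition \ref{prop_modequ} and Corollary \ref{mod}. For a given $\tau_0 = \tau(a,b,c)$ with $a\tau_0^2 + b\tau_0 + c = 0$, I first locate an integer $n$ with $\gcd(n,17)=1$ for which $\tau_0$ and one of its images $(\alpha\tau_0+\beta)/\delta$ (with $\alpha\delta=n$, $0\le\beta<\delta$, $\gcd(\alpha,\beta,\delta)=1$) are $\Gamma_0(17)$-equivalent — equivalently, for which there is $\gamma \in \Gamma_0(17)$ with $\gamma\cdot\tau_0 = (\alpha\tau_0+\beta)/\delta$. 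This forces $x(\tau_0)$ to be a root of $\Psi_n\big(x(\tau_0), x(\tau_0)\big)=0$ (a ``fixed-point'' of the $n$th modular equation in the terminology of \cite{MR1400423}), reducing the problem to a one-variable algebraic equation. Because the class number and the discriminant $b^2-4ac$ pin down $\bbQ(x(\tau_0))$ to be at most quadratic, $x(\tau_0)$ is among the roots of $\Psi_n(X,X)$ lying in the appropriate quadratic field; a high-precision numerical evaluation of $x(\tau_0)$ from the $q$-expansion then selects the correct root unambiguously.

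The steps, in order, are: (i) for each $\tau_0$ compute the small integer $n$ and the explicit matrix $\gamma\in\Gamma_0(17)$ realizing the equivalence $\gamma\tau_0 = (\alpha\tau_0+\beta)/\delta$ — this is a finite search controlled by $|b^2-4ac|$ and the level $17$; (ii) substitute $X=Y=x(\tau_0)$ into $\Psi_n$ (taken from Corollary \ref{mod} for $n=2,3$, and obtained by the same $q$-expansion procedure sketched there for any larger $n$ needed) and factor $\Psi_n(X,X)$ over $\bbQ$; (iii) using the product representation for $x$ via $r,s$ (Proposition \ref{h}) or the theta representation \eqref{theta}, compute $x(\tau_0)$ numerically to enough digits to identify which root of the relevant quadratic (or linear) factor it equals, matching the tabulated value $(\,\cdots\,)^{-1}$; (iv) for the entry $-187$, $(17,-17,7)$, the same process yields the rational root $-1/21$, and for $\tau(17,-8,1)$ — which is $\Gamma_0(17)$-equivalent to the elliptic point or to $i\infty$-type behavior — one checks directly from \eqref{theta} that the theta quotient forces $x=1$, consistent with $x(\rho_\pm)=1$ already used in Lemma \ref{lemma_Zq_DIFEQ}. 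Finally, collating the list of all $\tau_0$ with $[\bbQ(x(\tau_0)):\bbQ]\le 2$ and $|x(\tau_0)|$ below the radius of convergence from Corollary \ref{cor_Zx_coeff} will be deferred to Theorem \ref{last}; here we only verify the displayed values.

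The main obstacle is step (i)–(ii) for the larger discriminants: one must choose $n$ correctly and then manage $\Psi_n(X,Y)$, whose degree $\psi(n)$ grows, so that $\Psi_n(X,X)$ is a polynomial of degree $2\psi(n)$ with potentially many extraneous roots (fixed points coming from other CM points of the same or smaller discriminant, and from the cusps/elliptic points). Isolating the arithmetically meaningful factor requires either a careful analysis of which orbit representatives are genuinely $\Gamma_0(17)$-equivalent to $\tau_0$, or a brute-force factorization combined with the numerical evaluation in step (iii); the bookkeeping of branch choices for the square roots $\sqrt{17}$, $i\sqrt7$, $\sqrt2$, $\sqrt{85}$ appearing in the table must be tied back to the explicit branch of $\beta(r)=\sqrt{4/r-4r-15}$ fixed in \eqref{eq:17}–\eqref{eq:18}. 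A secondary (but purely computational) difficulty is generating $\Psi_n$ for whichever $n>3$ is required: this follows the recipe in the proof of Corollary \ref{mod} — express each elementary symmetric function of the $x\big((\alpha\tau+\beta)/\delta\big)^{-1}$ as a polynomial in $1/x(\tau)$ of degree at most $\psi(n)$ by matching $q$-expansions up to the order dictated by the pole orders at the cusps — and is routine once set up, so I expect no conceptual barrier there, only length.
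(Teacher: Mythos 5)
Your proposal follows essentially the same route as the paper: for each $\tau_0=\tau(a,b,c)$ exhibit an admissible triple $(\alpha,\beta,\delta)$ with $\alpha\delta=n$, $\gcd(n,17)=1$, such that $x(\tau_0)=x((\alpha\tau_0+\beta)/\delta)$, conclude that $X=x(\tau_0)$ is a root of $\Psi_n(X,X)=0$, factor, and identify the correct root numerically (the paper records the needed matrices in a table and treats $x(\tau(17,-8,1))=1$ by the same $\Psi_2$ mechanism rather than via the theta quotient). One small correction to your step (i): for most entries the required identity is realized by an element of $\langle \Gamma_0(17), W_{17}\rangle$ rather than of $\Gamma_0(17)$ alone --- e.g.\ the paper's model case uses $-1/(17\tau)=\tau/2$ for $\tau=i\sqrt{2/17}$ --- which is harmless because $x$ is $W_{17}$-invariant, but your search must include the Fricke involution or it will fail to produce a suitable $n$ for those entries.
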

\begin{proof}
First note that $$\tau(17,-34,19) = \frac{1}{17} \left(17+i
  \sqrt{34}\right).$$ Therefore, $$x( \tau ) = x\Bigl ( i \sqrt{\frac{2}{17}}\Bigr )$$

Observe that with $$ \tau = i \sqrt{\frac{2}{17}}, \qquad - \frac{1}{17 \tau} = \frac{\tau}{2}.$$
Since $x(\tau)$ is invariant under the Fricke involution $W_{17}$, we have
$x( \tau)=x(\tau/2)$. That is, $X=Y$ in the degree 2 modular equation above. Setting $Y=X$ and simplifying the equation, we get
\begin{align}
  \label{eq:20}
X^2(X-1)(X+1)(9X^2+24X-1)=0.  
\end{align}
Now that we have proven $x(\tau)$ satisfies \eqref{eq:20}, we may
numerically deduce $x(\tau)$ is a root of $9X^2+24X-1$, and 
$$
x\Bigl ( i \sqrt{\frac{2}{17}}\Bigr )=-\frac{4}{3}+\frac{1}{3}\sqrt{17}.
$$
 We may similarly prove $x(\tau(17,-8,1)) =1$ and the remaining evaluations in Table
  \ref{tab_x_values_17} from modular equations of degree $n$ if we can
  determine, for each given value of $\tau = \tau(a,b,c)$,
  an upper upper triangular matrix $(\alpha,\beta ; 0,\delta)$ such
  that $x(\tau)  = x((\alpha,\beta;0,\delta)
  \tau)$ and
  $\alpha \delta = n$, $0\le \beta <\delta$, $\gcd(\alpha, \beta,
  \delta) = 1$, with $\gcd(n,17) =1$. For each $\tau$,  Table \ref{two} provides a $\gamma \in
  \Delta_n^*(17)$ such that $\gamma \tau = \tau$ or $W_{17}\tau$ and a
  $\Gamma_{0}(17)$ equivalent upper triangular matrix $(\alpha,\beta;0,\delta)$.
\begin{table}
\begin{equation}
\begin{array}{lll}
 \tau(a,b,c) & \text{Element of }\Delta_{n}^{*}(17)
 & (\alpha,\beta;0,\gamma) \\ 
  (17,17,25) &   (-1, -2; 17, -25) & (1, 2; 0, 59)  \\
  (17,17,19) &   (1, 0; 17, 19) & (1, 0; 0, 19)   \\ 
  (17,-17,13) & (-1, 0; 17, -13) & (1, 0; 0, 13)  \\
  (17,-27,17) & (13, -17; 17, -14)^{\dagger} & (1, 81; 0, 107) \\
  (17,-41,31) & (20, -31; 17, -21)^{\dagger} & (1, 68; 0, 107) \\
  (17,-34,23) & (-1, 0; 34, -23) & (1, 0; 0, 23)  \\
  (34,-68,37) & (-2, 1; 51, -37) & (1, 11; 0, 23)  \\
  (17,-34,22) & (-1, 1; 17, -22) & (1, 4; 0, 5)  \\
  (17,-17,9) &   (-2, 1; 17, -18) & (1, 9; 0, 19)  \\
  (17,-17,7) &   (-1, 0; 17, -7) & (1, 0; 0, 7)  \\
  (17,-34,19) & (-1, 1; 17, -19) & (1, 1; 0, 2)  \\
  (17,-8,1) & (3,-1; 17, -5) & (1,1; 0, 2)
\end{array}
\end{equation}
\caption{Elements of $\Delta_n^*(17)$
mapping $\tau$ to its image under $W_{17}$ or fixing$^{\dagger}$ $\tau$
 and
a corresponding $\Gamma_{0}(17)$ equivalent upper triangular matrix $(\alpha,\beta;0,\gamma)$.}
\label{two}
\end{table}
\end{proof}

For values $x(\tau)$ in the domain of validity
for series from Corollary \ref{cor_Zx_coeff},
we may
construct Ramanujan-Sato expansions via
Theorem \ref{thm_pi_main}, a specialization of \cite[Theorem
2.1]{MR2073912}. 


\begin{theorem}[Series for $1/\pi$]
\label{thm_pi_main}
Suppose there is a matrix $(a,b;c,d) \in \langle \Gamma_0(17),W_{17} \rangle$ such that
\begin{equation*}
\frac{a \tau+b}{c \tau+d}= \frac{\alpha \tau+\beta}{\delta}
\end{equation*}
for $\alpha \delta=n$ and $0\le\beta<\delta$. Set $X=x(\tau)$, which
is determined from $\Psi_n(X,X)=0$, and further set $$W=w(\tau),\quad
\Psi_X=\frac{\partial \Psi_n}{\partial X}(X,X),\quad
\Psi_Y=\frac{\partial \Psi_n}{\partial Y}(X,X),$$ and let
$\epsilon\in\mathbb{Q}$ and $\eta=\pm1$ satsify for all $\tau$
\begin{equation*}
z\left( \frac{a \tau+b}{c \tau+d} \right) = \epsilon (c \tau+d)^2 z(\tau) \text{,} \quad w\left( \frac{a \tau+b}{c \tau+d} \right) = \eta w(\tau).
\end{equation*}
If $A_k$ is the sequence defined in Corollary \ref{cor_Zx_coeff} and
\begin{align*}
B&=-\frac{i W \left(\delta ^2 \Psi _X (a d-b c)+\alpha ^2
   \eta  \epsilon  \Psi _Y (c \tau +d)^4\right)}{2 \alpha ^2 c
   \eta  \epsilon  \Psi _Y (c \tau +d)^3}\text{,}\\
C&=\frac{i \delta ^2 (b c-a d) W}{2 \alpha ^2 c \eta \epsilon  \Psi
   _Y^3 (c \tau+d)^3} (\Psi _X \Psi _Y \left(\Psi _X+\Psi _Y\right)
   (1+\theta_X \log W) \\ & \qquad + (\Psi _X^2 \Psi _{\text{YY}}-2 \Psi_X \Psi _{\text{XY}} \Psi _Y+\Psi_{\text{XX}} \Psi _Y^2) X) \text{,}
\end{align*}
then
\begin{equation*}
\frac{1}{\pi} = \sum_{k=0}^{\infty}  A_k (B k+C) X^k\text{.}
\end{equation*}
\end{theorem}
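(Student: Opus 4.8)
The plan is to derive the series for $1/\pi$ by specializing the general machinery of \cite{MR2073912} to the modular function $x(\tau)$ and its associated weight-two forms $z$ and $w$. The starting point is Theorem \ref{ex_deq} together with the differential equation of Theorem \ref{thm_ZxDIFEQ}: the form $z$ corresponds to a solution of a third-order linear ODE in $x$, and the other two solutions of that ODE are (up to the usual normalization) $z\log q$ and $z\log^2 q$. The Clausen-type construction underlying Ramanujan-Sato series exploits the fact that a third-order ODE arising this way is the symmetric square of a second-order one, so that $z$ times a period behaves like a product of hypergeometric-type functions, and $1/\pi$ appears after evaluating the resulting bilinear combination at a CM point where the two periods are commensurable over $\overline{\mathbb Q}$.

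Concretely, I would proceed as follows. First, fix $\tau$ and the matrix $(a,b;c,d)\in\langle\Gamma_0(17),W_{17}\rangle$ with $(a\tau+b)/(c\tau+d) = (\alpha\tau+\beta)/\delta$, $\alpha\delta=n$. The existence of such a relation forces $x(\tau)=x((\alpha\tau+\beta)/\delta)$, so $X=x(\tau)$ is a root of $\Psi_n(X,X)=0$; this is exactly the mechanism used in Theorem \ref{expl} and Table \ref{two}. Next, differentiate the modular relation $\Psi_n(x(\tau), x((\alpha\tau+\beta)/\delta))=0$ and use the chain rule together with $x_q=\tfrac12 xwz$ (from the proof of Theorem \ref{thm_ZxDIFEQ}) and the corresponding formula for $\theta_q$ of the transformed argument; this produces the linear relation among the two values of $w z$ that gives rise to the coefficient $B$, with the factors $\Psi_X,\Psi_Y$ and the automorphy factor $(c\tau+d)$ entering precisely as written. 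The transformation laws $z((a\tau+b)/(c\tau+d))=\epsilon(c\tau+d)^2 z(\tau)$ and $w((a\tau+b)/(c\tau+d))=\eta w(\tau)$ (valid by Proposition \ref{prop_level17_basis}(3) and Proposition \ref{prop_level17_gens}(5), which determine $\epsilon\in\mathbb Q$ and $\eta=\pm1$) are substituted to eliminate the transformed form and the period. Then, using the power series $z=\sum A_k x^k$ from Corollary \ref{cor_Zx_coeff} and the known linear-in-$\tau$ solution, one forms the combination whose value at $\tau$ is $1/\pi$; comparing the $\tau$-derivative of this combination with the recursion for $A_k$ yields the stated $B$ and the more elaborate $C$, in which the second-order data $\Psi_{XX},\Psi_{XY},\Psi_{YY}$ and $\theta_X\log W$ appear because $C$ records the constant term of a logarithmic-derivative expansion at the CM point.

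The main obstacle is the bookkeeping needed to show that the specific closed forms for $B$ and $C$ are exactly the output of \cite[Theorem 2.1]{MR2073912} applied to the present data, rather than merely of that shape: one must check that the normalization of the third solution $z\log^2 q$ is consistent with the Wronskian normalization implicit in Corollary \ref{cor_Zx_coeff}, that the branch of $w = \sqrt{-127x^4-48x^3-66x^2-16x-1}$ is the one compatible with $x_q=\tfrac12 xwz$, and that the factors of $i$ and the signs $\epsilon,\eta$ combine to give a real, positive answer. Once the dictionary between $(x,w,z)$ here and the $(t, \text{theta-quotient}, \text{Eisenstein weight-two form})$ of \cite{MR2073912} is made explicit, the identity reduces to the cited theorem; I would therefore state it as a specialization, verify the hypotheses of \cite[Theorem 2.1]{MR2073912} (the ODE of Theorem \ref{thm_ZxDIFEQ}, the algebraic relation of Proposition \ref{prop_level17_gens}(5), and the modular relation of Proposition \ref{prop_modequ}), and then read off $B$ and $C$ from the general formulas after substituting $\Psi_n$, $\epsilon$, $\eta$, and the automorphy factor $(c\tau+d)$.
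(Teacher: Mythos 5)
Your proposal is correct and follows essentially the same route as the paper: the paper's proof is precisely to differentiate the transformation law $z((a\tau+b)/(c\tau+d))=\epsilon(c\tau+d)^2 z(\tau)$ once and the modular equation $\Psi_n(x(\tau),x((\alpha\tau+\beta)/\delta))=0$ twice, then evaluate at the fixed point, reading the result as a specialization of \cite[Theorem 2.1]{MR2073912} exactly as you describe (the symmetric-square/Clausen discussion is motivational and not needed for the actual computation). One trivial slip: the quartic under the square root should be $-127x^4-48x^3-66x^2-16x+1$, per Proposition \ref{prop_level17_gens}(5).
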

\begin{proof}
Differentiate the relation
\begin{equation*}
z\left( \frac{a \tau+b}{c \tau+d} \right) = \epsilon (c \tau+d)^2 z(\tau)
\end{equation*}
once and the relation
\begin{equation*}
\Psi_n \left(x(\tau), x\left( \frac{\alpha \tau+\beta}{\delta} \right) \right)=0
\end{equation*}
twice and then set $\tau$ to the value in the hypothesis of the theorem.
\end{proof}


\begin{corollary} \label{ram_sato}
If $A_k$ is the sequence defined in Corollary \ref{cor_Zx_coeff},
\begin{align*}
\frac{\sqrt{11}}{\pi}                         &= \sum_{k=0}^{\infty} A_k \frac{307+748k}{ (-21)^{k+2}}\text{,}\\
\frac{2 \sqrt{154 \sqrt{17}-634}}{\pi}        &= \sum_{k=0}^{\infty} A_k \frac{1779-195 \sqrt{17}+3040 k}{ (-22-7 \sqrt{17})^{k+2}}\text{,}\\
\frac{214 \sqrt{119}-882 \sqrt{7}}{\pi}       &= \sum_{k=0}^{\infty} A_k \frac{9241-1047 \sqrt{17}+21280k}{ (-90-21 \sqrt{17})^{k+2}}\text{,}\\
\frac{\sqrt{1041894 \sqrt{17}-4295839}}{\pi}  &= \sum_{k=0}^{\infty} A_k \frac{71065-15096 \sqrt{17}+50740k}{ (-345-84 \sqrt{17})^{k+2}}\text{,}\\
\frac{9 \sqrt{2038550094 \sqrt{17}-8405157343}}{\pi}              &= \sum_{k=0}^{\infty} A_k \frac{74004567-11655082 \sqrt{17}+178775028k}{ (-1025-252 \sqrt{17})^{k+2}}\text{,} \\ 
\frac{\sqrt{14(1267990301 \mp 85084065 i \sqrt{7})}}{\pi}  &= \sum_{k=0}^{\infty} A_k \frac{3370317797 \pm 95119383i\sqrt{7}+12974719520k}{161874(30\pm 33i \sqrt{7})^{k}}\text{,} 
\end{align*}
and
\begin{align*}
\frac{\sqrt{9 \sqrt{17}-37}}{\pi }    &= \sum_{k=0}^{\infty} A_k \frac{32-3 \sqrt{17}+32 k}{(12+3\sqrt{17})^{k+2}}\text{,}\\
\frac{261 \sqrt{5}-135 \sqrt{17}}{\pi} &= \sum_{k=0}^{\infty} A_k \frac{21500-788 \sqrt{85}+54720 k}{(29+4 \sqrt{85})^{k+2}}\text{,}\\
\frac{539 \sqrt{6}\mp 735 \sqrt{3}}{\pi} 2^{(1\mp 1)/2} &= \sum_{k=0}^{\infty} A_k \frac{58962 \mp7226 \sqrt{2}+199920 k}{(55\pm24 \sqrt{2})^{k+2}}.
\end{align*}

\end{corollary}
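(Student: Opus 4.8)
The plan is to invoke Theorem \ref{thm_pi_main} once for each of the eleven rows of Table \ref{tab_x_values_17}, with every remaining ingredient read off from the data already assembled. Fix a row, with CM point $\tau = \tau(a,b,c)$ and value $X = x(\tau)$ supplied by Theorem \ref{expl}. Table \ref{two} records an element $\gamma \in \Delta_n^*(17)$ that either fixes $\tau$ or sends it to $W_{17}\tau$, together with a $\Gamma_0(17)$-left-equivalent upper triangular matrix $M = \smat\alpha\beta0\delta$, say $\gamma = \gamma_1 M$ with $\gamma_1 \in \Gamma_0(17)$. Setting $g = \gamma_1^{-1}$ in the fixing case and $g = \gamma_1^{-1}W_{17}$ otherwise, one obtains $g \in \langle \Gamma_0(17), W_{17}\rangle$ with $g\tau = M\tau = (\alpha\tau+\beta)/\delta$, $\alpha\delta = n$, $0 \le \beta < \delta$ — exactly the hypothesis of Theorem \ref{thm_pi_main}. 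The scalars $\epsilon \in \bbQ$ and $\eta = \pm 1$ attached to $g$ are forced by Proposition \ref{prop_level17_basis}(3) and Proposition \ref{prop_level17_gens}(4): $z$ has weight two and is invariant under $\Gamma_0(17)$ but changes sign under $W_{17,2}$, while $w$ has weight zero and is invariant under $\Gamma_0(17)$ but changes sign under $W_{17}$, so $(\epsilon,\eta) = (1,1)$ when $g \in \Gamma_0(17)$ and $\epsilon$ is the rational weight-two factor of $W_{17}$ with $\eta = -1$ otherwise. Finally $W = w(\tau)$ is one of the two square roots of $-127X^4 - 48X^3 - 66X^2 - 16X + 1$ given by Proposition \ref{prop_level17_gens}(5); the correct branch is pinned down by the $q$-expansion of $w = \tfrac{2}{z}\theta_q \log x$.

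The one nontrivial input is the modular polynomial $\Psi_n(X,Y)$ together with its first and second partial derivatives along the diagonal at $(X,X)$. For $n = 2, 3$ these come from Corollary \ref{mod}; for each remaining modulus $n \in \{5, 7, 13, 19, 23, 59, 107\}$ occurring in Table \ref{two} one constructs $\Psi_n$ by the recipe in the proof of Corollary \ref{mod}: expand the product on the right of \eqref{prop_modequ_equ1}, note that each coefficient of $Y^{-k}$ is a function on $\bbH/\Gamma_0(17)$ invariant under $W_{17}$, hence a polynomial of degree at most $\psi(n)$ in $1/x(\tau)$, and fix its coefficients by matching $q$-expansions up to the relevant Sturm bound. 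Substituting the resulting algebraic numbers into the closed forms for $B$ and $C$ in Theorem \ref{thm_pi_main} — using $\theta_X \log W = \tfrac{X P'(X)}{2 P(X)}$ with $P(X) = -127X^4 - 48X^3 - 66X^2 - 16X + 1$ — and simplifying (each $c\tau + d$ is a quadratic irrationality, so $B$ and $C$ emerge algebraic of degree at most two) yields the stated series for that row. The two $\sqrt{2}$-conjugate rows and the two $i\sqrt{7}$-conjugate rows produce the two halves of the $\pm$/$\mp$ series: since each pair of singular values is Galois conjugate, so are the corresponding expansions.

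A useful reduction lightens the bookkeeping. Differentiating the identity $\Psi_n(x(\tau), x(M\tau)) = 0$ once and using $x(g\tau) = x(\tau)$ forces, at $(X,X)$,
$$\frac{\Psi_X}{\Psi_Y} = -\frac{\alpha (c\tau+d)^2}{\delta (ad - bc)},$$
so that $B$ involves no modular polynomial at all and reduces to $B = -\,\dfrac{i W \bigl(\alpha \eta \epsilon (c\tau+d)^2 - \delta\bigr)}{2 \alpha c\, \eta \epsilon\, (c\tau+d)}$; only the constant $C$ still requires second-order information about $\Psi_n$ near the diagonal.

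The main obstacle is precisely that second-order computation for the largest moduli. Since $\psi(107) = 108$, the polynomial $\Psi_{107}$ is symmetric of degree $108$ in each variable, so producing enough of its $q$-expansion to pin down its partials at $(X,X)$ — equivalently, controlling the $108$ functions $x\bigl((\tau+\beta)/107\bigr)$ — is heavy, though routine for a computer algebra system; the cases $n = 23, 59$ are of the same nature but smaller. Everything else — the evaluation of $X$, the branch of $W$, the scalars $\epsilon, \eta$, and the arithmetic with $c\tau + d$ — is elementary once Tables \ref{tab_x_values_17} and \ref{two} are available.
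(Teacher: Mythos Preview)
Your approach is correct and essentially the same as the paper's: each series is obtained by a direct application of Theorem~\ref{thm_pi_main} at the corresponding CM point, feeding in $X$, $W$, $(\epsilon,\eta)$, and the partials of the relevant $\Psi_n$. The only difference is bookkeeping---the paper parameterizes the first block as $\tau=\tfrac12+\tfrac12\sqrt{n/17}\,i$ with $g=\smat{17}{-9}{34}{-17}$ for $n=11,19,35,59,83$ (and $n=107$ for the $i\sqrt7$ pair), and the second block as $\tau=\sqrt{n/17}\,i$ or $\sqrt{3/34}\,i$ with $g=W_{17}$ for $n=2,5,6$, so its moduli differ from the Table~\ref{two} moduli you read off; your extra reduction of $B$ via $\Psi_X/\Psi_Y=-\alpha(c\tau+d)^2/(\delta(ad-bc))$ is a nice observation the paper does not record.
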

\begin{proof}
The first five series may be derived by setting $\tau=\frac{1}{2}+\frac{1}{2} \sqrt{\frac{n}{17}} i$ and using $\frac{17\tau-9}{34\tau-17}=\frac{\tau+(n-1)/2}{n}$ with $\epsilon=-1/17$ and $\eta=-1$ in Theorem \ref{thm_pi_main} for the values $n=11,19,35,59,83$. The subsequent pair may be derived from Theorem \ref{thm_pi_main} by setting $\tau = (\pm 7 +\sqrt{427}i)/34$, using $(11\tau-2)/(17\tau-3)=(\tau+69)/107$ and $(13\tau+3)/(17\tau +4) = (\tau +82)/107$, respectively. The next three arise from setting $\tau=\sqrt{\frac{n}{17}} i$ and using $\frac{-1}{17\tau}=\frac{\tau}{n}$ for $n=2,5,6$. The final series comes from setting  $\tau=\sqrt{3/34}i$ and using $\frac{-1}{17\tau}=\frac{2\tau}{3}$.
\end{proof}


\begin{theorem} \label{last}
  There are precisely eleven $\Gamma_{0}(17)$ inequivalent algebraic $\tau$ in the upper half plane such that $[\Bbb Q(x(\tau )): \Bbb Q]
  \le 2$ with $x(\tau)$ in the radius of convergence of Corollary
  \ref{cor_Zx_coeff}. 
\end{theorem}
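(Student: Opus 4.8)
The eleven entries of Table~\ref{tab_x_values_17} were shown to occur in Theorem~\ref{expl}, so what remains for Theorem~\ref{last} is the reverse assertion: no further $\Gamma_0(17)$-inequivalent $\tau$ has $x(\tau)$ of degree at most two over $\bbQ$ with $|x(\tau)|$ below the radius of convergence $0.05122\ldots$ of Corollary~\ref{cor_Zx_coeff}. The plan is to first force $\tau$ to be a complex multiplication point, then to bound its discriminant in terms of a class number using the theory of singular values of the Hauptmodul $x$ for $\langle\Gamma_0(17),W_{17}\rangle$, and finally to run through the resulting finite list and test each candidate against the radius of convergence by means of the modular equations of Proposition~\ref{prop_modequ}.

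For the reduction to CM points: if $x(\tau)\in\olbbQ$, then by Proposition~\ref{prop_level17_gens}(5) also $w(\tau)\in\olbbQ$; since $j(\tau)$ is $\Gamma_0(17)$-invariant it lies in $A_0(\Gamma_0(17))=\mathbb{C}(x,w)$, and the relevant relation has rational coefficients because $j(\tau)$, $j(17\tau)$, $x$ and $w$ all have $q$-expansions in $\bbQ((q))$ (the last by Proposition~\ref{h}), so $j(\tau)\in\olbbQ$. By Schneider's theorem an algebraic $\tau$ with $j(\tau)$ algebraic is necessarily imaginary quadratic. Hence every admissible $\tau$ is a CM point; write $(a,b,c)$ for its primitive defining form and $D=b^2-4ac<0$ for its discriminant.

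Because $x$ is a Hauptmodul for the genus-zero group $\langle\Gamma_0(17),W_{17}\rangle$ and is defined over $\bbQ$, the CM theory developed in \cite{MR1400423} (already used implicitly in Theorem~\ref{expl} and Corollary~\ref{mod}) identifies $x(\tau)$ as a singular value lying in a ring class field of the order $\mathcal{O}_D$, whose full set of $\operatorname{Gal}(\olbbQ/\bbQ)$-conjugates is exactly $\{x(\tau')\}$ with $\tau'$ ranging over the $\langle\Gamma_0(17),W_{17}\rangle$-equivalence classes of primitive CM points of discriminant $D$ on which $17$ is not inert, the Galois action being transitive. Thus $[\bbQ(x(\tau)):\bbQ]$ equals the number of such classes. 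Since the number of $\Gamma_0(17)$-classes of such points is $h(D)$ when $17$ ramifies in $\mathcal{O}_D$ and $2h(D)$ when $17$ splits, folding in the involution $W_{17}$ (equivalently, complex conjugation on Heegner points) and keeping track of the classes fixed by $W_{17}$ shows that $[\bbQ(x(\tau)):\bbQ]$ is, up to a controlled correction from those fixed classes, $\tfrac12 h(D)$ in the ramified case and $h(D)$ in the split case. The hypothesis $[\bbQ(x(\tau)):\bbQ]\le 2$ therefore forces $h(D)$ to be at most a small explicit bound, so by the known classification of imaginary quadratic orders of small class number only finitely many discriminants $D$ remain. For each such $D$ with $\left(\frac{D}{17}\right)\neq-1$ one lists the finitely many classes of CM points of discriminant $D$; for a reduced representative $\tau$ of each, following the proof of Theorem~\ref{expl} and Table~\ref{two}, one exhibits an element of $\Delta_n^*(17)$ (for a suitable $n$ coprime to $17$) carrying $\tau$ to $\tau$ or to $W_{17}\tau$, so that $\Psi_n(X,X)=0$ determines the algebraic number $x(\tau)$, the correct root being pinned down numerically. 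Comparing $|x(\tau)|$ with the radius of convergence then eliminates all but eleven classes, which are precisely those of Table~\ref{tab_x_values_17}; only $D=-187$ yields a rational value, namely $x=-1/21$, which accounts for the single rational series \eqref{eq:1}.

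The main obstacle is making the degree formula for $x(\tau)$ effective: one must pin down the exact number of $\langle\Gamma_0(17),W_{17}\rangle$-classes of CM points of a given discriminant, including the bookkeeping of the classes fixed by $W_{17}$ (the $\tau$ with $\tau\sim_{\Gamma_0(17)}W_{17}\tau$), since it is this count, rather than $h(D)$ itself, that is constrained by $[\bbQ(x(\tau)):\bbQ]\le 2$ and that consequently determines which discriminants must be tested. Once this and the effective enumeration of the orders with the required small class number are in hand, the remaining per-discriminant verification is a routine iteration of the modular-equation computation already carried out in Theorem~\ref{expl}.
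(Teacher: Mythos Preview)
Your outline is sound and would succeed, but it takes a more refined route than the paper and in doing so creates exactly the bookkeeping difficulty you flag in your last paragraph. The paper bypasses that obstacle entirely.

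Where you invoke the ring--class--field description of the singular values of $x$ and try to count $\langle\Gamma_0(17),W_{17}\rangle$-classes of Heegner points (split versus ramified behaviour of $17$, fixed classes under $W_{17}$, etc.) in order to squeeze $h(D)$ down, the paper simply uses the existence of a degree-$2$ polynomial relation $P(x,j)=0$ between $x$ and the $j$-invariant (coming from the index-$2$ extension $A_0(\Gamma_0(17))/\bbC(x)$, cf.\ \cite[Remark~1.5.3]{MR1400423}). This gives
\[
[\bbQ(j(\tau)):\bbQ]\ \le\ 2\,[\bbQ(x(\tau)):\bbQ]\ \le\ 4,
\]
so $h(d)\le 4$, and then the explicit bound $|d|\le 1555$ from the classification of orders with $h\le 4$ finishes the reduction to a finite search. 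No Heegner-point combinatorics are needed, and no separation into split/ramified/inert cases.

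The endgame is also organized differently. Rather than produce, for each candidate discriminant, an element of $\Delta_n^*(17)$ and solve $\Psi_n(X,X)=0$ as in Theorem~\ref{expl}, the paper first computes the resultant of $P(X,Y)$ with the Hilbert class polynomial $H_d(Y)$; the linear and quadratic factors in $X$ already single out those $x$-values of degree $\le 2$, after which the modular-equation verification is applied only to the survivors. Your approach would work too, but the paper's resultant step makes the filtering automatic and cheap.

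In short: your strategy is correct, but the paper replaces the delicate ``make the degree formula effective'' step you identify as the main obstacle with a one-line degree inequality coming from the algebraic relation between $x$ and $j$, and replaces the case-by-case modular-equation search with a resultant computation.
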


\begin{proof}
  We formulate a complete list of algebraic $\tau$ such that $[\Bbb Q(x(\tau )):
  \Bbb Q] \le 2$ using well known facts about the $j$ invariant
  \cite{MR1513075}. First, for algebraic $\tau$, the only algebraic values of $j(\tau)$
  occur at $\Im \tau>0$ satisfying $a\tau^{2}
  + b\tau + c = 0$ for $a,b,c \in \Bbb Z$, with $d =
  b^{2} - 4ac <0$. Moreover, $[\Bbb Q(j(\tau)):\Bbb Q] = h(d)$, where
  $h(d)$ is the class number. Since there is a polynomial relation
  $P(x,j)$ between $x$ and $j$ of degree $2$ \cite[Remark
  1.5.3]{MR1400423}, we have 
  $$
  [\Bbb Q(j(\tau)):\Bbb Q]\leq[\Bbb Q(j(\tau),h(\tau)):\Bbb Q] \le  2 [\Bbb
  Q(x(\tau)):\Bbb Q],$$ and so values $\tau$ with  $[\Bbb Q(x(\tau )): \Bbb Q]
  \le 2$ satisfy $[\Bbb Q(j(\tau)):\Bbb Q] =h(d) \le 4$. Therefore, the
  bound $|d| \le 1555$ for $h(d) \le 4$ from \cite{MR2031415}
  implies that the following algorithm results in a complete list of algebraic
  $(\tau, x(\tau))$ with $[\Bbb Q(x(\tau )): \Bbb Q] \le 2$: \\
  \indent For each discriminant $-1555 \le d \le -1$, 
  \begin{enumerate}
  \item List all primitive reduced $\tau = \tau(a,b,c)$ of
    discriminant $d$ in a fundamental domain for $PSL_{2}(\Bbb Z)$. Translate these values via a set of coset representatives for $\Gamma_{0}(17)$ to a fundamental domain for $\Gamma_{0}(17)$. 
\item Factor the resultant of $P(X,Y)$ and the class polynomial
  $$H_{d}(Y) = \prod_{\substack{(a,b,c)\text{ reduced, primitive} \\
      d = b^{2}-4ac}} \left ( Y - j\Bigl (
  \frac{-b+\sqrt{d}}{2 a} \Bigr ) \right ).$$  The linear and quadratic factors
  of the resultant correspond to a complete list of $x = x(\tau)$, for
  $\tau$ of discriminant $d$, such that $[\Bbb Q(x(\tau )): \Bbb Q]
  \le 2$. Associate candidate values $\tau$ from Step 1 to $x$ by
  numerically approximating $x(\tau)$. For each tentative pair,
  $(\tau, x)$, prove the evaluation $x=x(\tau)$ as indicated in the proof of Theorem \ref{expl}. 
  \end{enumerate}
Values $\tau$ with
$x(\tau)$ in the radius of convergence of Corollary
\ref{cor_Zx_coeff} appear in Table \ref{tab_x_values_17}.
\end{proof}

%

\bibliographystyle{plain}
\bibliography{references}

\begin{thebibliography}{10}

\bibitem{berndt91}
B.~C. Berndt.
\newblock {\em Ramanujan's notebooks. {P}art {III}}.
\newblock Springer-Verlag, New York, 1991.

\bibitem{MR1353909}
B.~C. Berndt and R.~A. Rankin.
\newblock {\em Ramanujan}, volume~9 of {\em History of Mathematics}.
\newblock American Mathematical Society, Providence, RI, 1995.
\newblock Letters and commentary.

\bibitem{MR988313}
J.~M. Borwein and P.~B. Borwein.
\newblock Ramanujan's rational and algebraic series for {$1/\pi$}.
\newblock {\em J. Indian Math. Soc. (N.S.)}, 51:147--160 (1988), 1987.

\bibitem{MR2073912}
H.~H. Chan, S.~H. Chan, and Z.-G. Liu.
\newblock Domb's numbers and {R}amanujan-{S}ato type series for {$1/\pi$}.
\newblock {\em Adv. Math.}, 186(2):396--410, 2004.

\bibitem{MR2981931}
H.~H. Chan and S.~Cooper.
\newblock Rational analogues of {R}amanujan's series for {$1/\pi$}.
\newblock {\em Math. Proc. Cambridge Philos. Soc.}, 153(2):361--383, 2012.

\bibitem{MR2822224}
H.~H. Chan, Y.~Tanigawa, Y.~Yang, and W.~Zudilin.
\newblock New analogues of {C}lausen's identities arising from the theory of
  modular forms.
\newblock {\em Adv. Math.}, 228(2):1294--1314, 2011.

\bibitem{MR1400423}
I.~Chen and N.~Yui.
\newblock Singular values of {T}hompson series.
\newblock In {\em Groups, difference sets, and the {M}onster ({C}olumbus, {OH},
  1993)}, volume~4 of {\em Ohio State Univ. Math. Res. Inst. Publ.}, pages
  255--326. de Gruyter, Berlin, 1996.

\bibitem{MR554399}
J.~H. Conway and S.~P Norton.
\newblock Monstrous moonshine.
\newblock {\em Bull. London Math. Soc.}, 11(3):308--339, 1979.

\bibitem{MR2994096}
S.~Cooper.
\newblock Sporadic sequences, modular forms and new series for {$1/\pi$}.
\newblock {\em Ramanujan J.}, 29(1-3):163--183, 2012.

\bibitem{MR3258323}
S.~Cooper, J.~Ge, and D.~Ye.
\newblock Hypergeometric transformation formulas of degrees 3, 7, 11 and 23.
\newblock {\em J. Math. Anal. Appl.}, 421(2):1358--1376, 2015.

\bibitem{MR3173188}
S.~Cooper and D.~Ye.
\newblock Explicit evaluations of a level 13 analogue of the
  {R}ogers-{R}amanujan continued fraction.
\newblock {\em J. Number Theory}, 139:91--111, 2014.

\bibitem{MR3324566}
S.~Cooper and D.~Ye.
\newblock The {R}ogers-{R}amanujan continued fraction and its level 13
  analogue.
\newblock {\em J. Approx. Theory}, 193:99--127, 2015.

\bibitem{MR2112196}
F.~Diamond and J.~Shurman.
\newblock {\em A first course in modular forms}, volume 228 of {\em Graduate
  Texts in Mathematics}.
\newblock Springer-Verlag, New York, 2005.

\bibitem{symmetry}
T.~Huber, D.~Lara, and E.~Melendez.
\newblock Balanced modular parameterizations.
\newblock {\em arXiv:1405.6761 [math.NT]}, Preprint.

\bibitem{MR3544515}
T.~Huber and D.~Schultz.
\newblock Generalized reciprocal identities.
\newblock {\em Proc. Amer. Math. Soc.}, 144(11):4627--4639, 2016.

\bibitem{MR890960}
S.~Lang.
\newblock {\em Elliptic functions}, volume 112 of {\em Graduate Texts in
  Mathematics}.
\newblock Springer-Verlag, New York, second edition, 1987.
\newblock With an appendix by J. Tate.

\bibitem{ramnote}
S.~Ramanujan.
\newblock {\em Notebooks. {V}ols. 1, 2}.
\newblock Tata Institute of Fundamental Research, Bombay, 1957.

\bibitem{approx}
S.~Ramanujan.
\newblock Modular equations and approximations to {$\pi$} [{Q}uart. {J}.
  {M}ath. {\bf 45} (1914), 350--372].
\newblock In {\em Collected papers of Srinivasa Ramanujan}, pages 23--39. AMS
  Chelsea Publ., Providence, RI, 2000.

\bibitem{rogers}
L.~J. Rogers.
\newblock Second {M}emoir on the {E}xpansion of certain {I}nfinite {P}roducts.
\newblock {\em Proc. London Math. Soc.}, S1-25(1):318.

\bibitem{MR1513075}
T.~Schneider.
\newblock Arithmetische {U}ntersuchungen elliptischer {I}ntegrale.
\newblock {\em Math. Ann.}, 113(1):1--13, 1937.

\bibitem{MR0314766}
G.~Shimura.
\newblock {\em Introduction to the arithmetic theory of automorphic functions}.
\newblock Publications of the Mathematical Society of Japan, No. 11. Iwanami
  Shoten, Publishers, Tokyo; Princeton University Press, Princeton, N.J., 1971.
\newblock Kan{\^o} Memorial Lectures, No. 1.

\bibitem{MR743544}
P.~Stiller.
\newblock Special values of {D}irichlet series, monodromy, and the periods of
  automorphic forms.
\newblock {\em Mem. Amer. Math. Soc.}, 49(299):iv+116, 1984.

\bibitem{MR2031415}
M.~Watkins.
\newblock Class numbers of imaginary quadratic fields.
\newblock {\em Math. Comp.}, 73(246):907--938 (electronic), 2004.

\bibitem{MR2031441}
Y.~Yang.
\newblock On differential equations satisfied by modular forms.
\newblock {\em Math. Z.}, 246(1-2):1--19, 2004.

\end{thebibliography}

\end{document}